\newtheorem{thm}{Theorem}[section] 
\newtheorem{lem}[thm]{Lemma}
\newtheorem{prop}[thm]{Proposition} 
\newtheorem{df-pr}[thm]{Definition-Proposition}
\theoremstyle{definition}
\newtheorem{defn}[thm]{Definition}
\newtheorem{rem}[thm]{Remark}
\newtheorem{exm}[thm]{Example}
\newcommand{\ZZ}{{\mathbb Z}}
\newcommand{\PP}{{\mathbb P}}
\newcommand{\LL}{{\mathbb L}}
\newcommand{\NN}{{\mathbb N}}
\newcommand{\sfm }{{\mathsf m }}
\newcommand{\sfs }{{\mathsf s}}
\newcommand{\frakX}{{\mathfrak X }}
\newcommand{\calA}{{\mathcal A}}
\newcommand{\calC}{{\mathcal C}}
\newcommand{\calO}{{\mathcal O}}
\newcommand{\calP}{{\mathcal P}}
\newcommand{\calQ}{{\mathcal Q}}
\newcommand{\calS}{{\mathcal S}}
\newcommand{\scA}{{\mathscr A}}
\newcommand{\scC}{{\mathscr C}}
\newcommand{\scL}{{\mathscr L}}
\newcommand{\scS}{{\mathscr S}}
\newcommand{\Fl}{\operatorname{Fl}}
\newcommand{\Gr}{\operatorname{Gr}}
\newcommand{\gr}{\operatorname{gr}}
\newcommand{\rk}{{\operatorname{rk}}}
\newcommand{\supp}{{\operatorname{supp}}}
\newcommand{\Det}{{\operatorname{Det}}}
\newcommand{\Pf}{{\operatorname{Pf}}}
\newcommand{\LG}{{\operatorname{LG}}}
\newcommand{\SP}{{{\calS\calP}}}
\newcommand{\SM}{\mathbf{Sm}_k}
\newcommand{\Laz}{\mathbb{L}}
\newcommand{\trecd}{\cdot\cdot\cdot}
\newcommand{\tred}{\ldots}
\newcommand{\spec}{{\rm Spec\,}}
\newcommand{\SE}{\mathcal{S}}
\newsavebox{\savepar}
\numberwithin{equation}{section}
\newcounter{labelflag} \setcounter{labelflag}{0}
\newcommand{\labelon}{\setcounter{labelflag}{1}}
\newcommand{\Label}[1]{\ifnum\thelabelflag=1\ifmmode
\makebox[0in][l]{\qquad\fbox{\rm#1}} \else
\marginpar{\vspace{0.7\baselineskip} \hspace{-1.1\textwidth}
\fbox{\rm#1}} \fi \fi \label{#1} } \labelon
\begin{document} 
\title{Kempf-Laksov Schubert classes for even infinitesimal cohomology theories}
\author{Thomas Hudson, and Tomoo Matsumura}
\date{}
\maketitle 
\begin{abstract}
In this paper, we prove a generalization of Kempf-Laksov formula for the degeneracy loci classes in even infinitesimal cohomology theories of the Grassmannian bundle and the Lagrangian Grassmannian bundle. 
\end{abstract}

\section{Introduction}
This work was motivated by the attempt to generalize the Kempf-Laksov formula of the Chow ring to general oriented cohomologies. In 1902, Giambelli \cite{Giambelli} described the fundamental classes of the Schubert varieties of the Grassmannian in a closed, determinantal expression involving the Chern class of the tautological vector bundle. Later, in 1974, Kempf and Laksov \cite{KempfLaksov} generalized Giambelli's formula to the Grassmann bundles associated to a vector bundle. The proof essentially consists in the construction, through a tower of projective bundles, of a resolution of singularities and in a Gysin computation which produces the formula for the fundamental class. Recently,  in \cite{HIMN},  together with T. Ikeda and H. Naruse, we generalized such computation to $K$-theory. 
  In this context we managed to obtain a determinantal formula for the Schubert classes. This was achieved by combining the geometric input given by Kempf-Laksov's resolution, with an algorithmic procedure modelled after the one used by Kazarian in \cite{Kazarian}. Given this state of things it looked reasonable to try and see whether or not this procedure could be further generalized to more general oriented cohomology theories. 

The aim of this paper is to prove a generalization of Kempf-Laksov formula in the infinitesimal theories $I_n^*$, the easiest examples of oriented cohomology theories beyond $K$-theory. Due to the fact that not all Schubert varieties have a well defined notion of fundamental class in a general oriented cohomology theories, the computation of the Schubert classes depends on the choice of the resolution, which in general is not unique. In our context, it is natural to consider the classes associated to Kempf-Laksov's resolutions. The advantage of considering those classes is that they are stable along the natural inclusions connecting the Grassmann bundles associated to bundles of increasing rank. This allows us to define a generalization of Schur/Grothendieck polynomials in the context of more general oriented cohomology theories. It is worth pointing out that for flag varieties and flag bundles another generalization of Schubert classes was studied in \cite{SchubertCalmes,EquivariantCalmes,SchubertHornbostel,ThomHudson,GeneralisedHudson,EquivariantKiritchenko}, by taking Bott-Samelson resolutions into consideration.


The notion of oriented cohomologies in algebraic geometry was introduced by Levine-Morel in \cite{LevineMorel}, who were inspired by the work \cite{Quillen} of Quillen in the category of differential manifolds. Among such theories, algebraic cobordism $\Omega^*$ is the universal one and it can be used to construct other theories with prescribed formal group laws: the infinitesimal theory $I_n^*$ is one of such examples. An oriented cohomology theory is characterized by the formal group law which encodes the expansion of the first Chern class of the tensor product of two line bundles. In particular, as one of the features of the universal cohomology theory, Levine-Morel identified the coefficient ring of $\Omega^*$ with the Lazard ring $\LL$ equipped with the universal formal group law.

Let $\alpha_n$ be an indeterminant, and let $Q_n:=\ZZ[\alpha_n]$ with $\alpha_n^2=0$ and $\deg \alpha_n = -n$. In Section \ref{subsect Lazard}, we introduce the surjective ring homomorphism $\LL \to Q_n$, following \cite[Part II, \S 5 and \S 7]{StableAdams}. We then define the infinitesimal cohomology theory $I_n^*$ by $I_n^*:= \Omega^*\otimes_{\LL} Q_n$. In this paper, we work with the case $n=2m$ for simplicity. Then the formal group law for $I_{2m}^*$ is given by (Lemma \ref{lemFGL2m})
\[
u\boxplus v=(u+v)\left[ 1+\alpha_{2m}\left( \sum_{i=1}^{2m-1}\gamma^{(2m)}_i u^iv^{2m-i}\right) \right].
\]
Here one sets
\[
\gamma^{(2m)}_i=\frac{1}{d_{2m}}\left[\binom{2m}{i} -(-1)^i\right],
\]
where $d_i=p$ if $i=p^e-1$ for some integer $e$ and a prime $p$, and otherwise let $d_i=1$. It is clear from the expression of the formal group law that the formal inverse is given by $\boxminus u=-u$.

A key role in our computation is played by the Segre classes $\scS_m(E)$ of vector bundles $E$, as it was in \cite{HIMN}. The definition we use is the exact analogue of that given by Fulton in \cite{FultonIntersection}. Using Quillen's formula, which for $\Omega^*$ was established by Vishik in \cite{Vishik}, we manage to describe the generating function of Segre classes in terms of Chern classes. This allows us to define the relative Segre classes $\scS_m(E-F)$ evaluated on each element $[E-F]$ of the Grothendieck group of vector bundles. Then, the following geometric interpretation is the main ingredient of our derivation of Kempf-Laksov formula.

\vspace{2mm}

\noindent{\bf Proposition A} (Proposition \ref{push of tensor}).
\emph{
Let $E$ and $F$ vector bundles over a smooth scheme $X$, respectively of rank $e$ and $f$. Consider the dual projective bundle $\PP^*(E)\stackrel{\pi}\rightarrow X$ with the tautological quotient line bundle $\calO(1)$. Then for each integer $s\geq 0$ we have
\[
\pi_*\Big(c_1(\calO(1))^s c_f(\calO(1)\otimes  F^{\vee})\Big)=\SE_{s+f-e+1}(E- F).
\]
}
We now explain our main result. Let $E$ be a vector bundle of rank $n$ over a smooth quasi-projective variety $X$. Consider  the Grassmann bundle of rank $d$ subbundles $\Gr_d(E) \to X$ and let $S$ be its the tautological vector bundle. Fix a complete flag $0=F^n\subset \cdots \subset F^1\subset E$ where the superscript indicates the corank. Let $\lambda=(\lambda_1,\dots,\lambda_r)$ be a partition of length $r$ such that $\lambda_1\leq n-d$. Let $X_{\lambda} \subset \Gr_d(E)$ be the associated degeneracy locus and $\widetilde{X}^{KL}_\lambda \to X_{\lambda}$ be its Kempf-Laksov resolution (see Section \ref{secKL}). Our explicit closed formula of the class $\kappa_{\lambda}$ of the resolution $\widetilde{X}^{KL}_\lambda \to \Gr_d(E)$ in $I_{2m}^*(\Gr_d(E))$ is as follows. Let $\scA_s^{(\ell)} := \scS_s(S^{\vee}- (E/F^{\ell})^{\vee})$ for $s\in \ZZ$ and $\ell=1,\dots,n$. Recall that the multi-Schur determinant is denoted as follows. For each $(s_1,\dots, s_r) \in \ZZ^r$ and $k_1,\dots, k_r\in \{0,1,\dots, n\}$, denote
\[
\Det[\calA^{(k_1)}_{s_1}\cdots \calA^{(k_r)}_{s_r}]=\det(\calA_{s_i+j-i}^{(k_i)})_{1\leq i,j\leq r}.
\]
\noindent{\bf Theorem B} (Theorem \ref{mainthmA}). \emph{
Let $\lambda$ be a partition in $\calP_d(n)$ with length $r$. Let $k_i:=\lambda_i-i+d$. The Kempf-Laksov class $\kappa_{\lambda}$ in $I_{2m}^*(\Gr_d(E))$ is given by
\begin{eqnarray*}
\kappa_{\lambda} &=& \Det\big[\calA^{(k_1)}_{\lambda_1}\cdots \calA^{(k_r)}_{\lambda_r}\big]\\
&&\ \ \ \ \ +
\alpha_{2m} \sum_{l=-m+1}^{m-1} (-1)^{m+l}\gamma_{m+l}^{(2m)} \left( \sum_{1\leq a<b \leq r} 
\Det\big[\calA^{(k_1)}_{\lambda_1}\cdots \calA^{(k_a)}_{\lambda_a+m+l}\cdots \calA^{(k_b)}_{\lambda_b+m-l}  \cdots \calA^{(k_r)}_{\lambda_r}\big]
\right).
\end{eqnarray*}
}

In Section \ref{secLagC}, we also apply our method to the computation of the Kempf-Laksov classes for the Lagrangian Grassmann bundles  and obtain a Pfaffian formula. We believe that it can be applied to a more general setting, which should include the orthogonal Grassmann bundles as well. In fact, in \cite{AndersonFulton2}, Anderson-Fulton  applied Kazarian's method to the degeneracy loci defined by vexillary (signed) permutations. We will pursue the general formulas describing those degeneracy loci in our future works.
\section{Algebraic cobordism and infinitesimal theories}
The main goal of this section is to introduce infinitesimal theories and explain how they relate to algebraic cobordism. This will require us to recall some basic facts about oriented cohomology theories, formal group laws and the construction of the Lazard ring.
\subsection{Oriented cohomology theories}
 In algebraic geometry the notion of oriented cohomology theory is due to Levine and Morel, who introduced it in \cite{LevineMorel}, inspired by the work of Quillen on differentiable manifolds (\cite{Quillen}). Such a theory consists of a contravariant functor  from the category of smooth schemes to graded abelian rings $A^*:\SM^{op}\rightarrow \mathcal{R}^*$, together with a family of push-forward maps $\{f_*:A^*(X)\rightarrow A^*(Y)\}$ associated to the projective morphisms $\{X\stackrel{f}\rightarrow Y\}$. This family is supposed to satisfy some straightforward functorial properties and to be compatible with the pull-back maps $g^*$ arising  from $A^*$, whenever $f$ and $g$ are transverse. Finally, $A^*$ is supposed to satisfy the extended homotopy property and the projective bundle formula, which respectively relate the evaluation of $A^*$ on vector and projective bundles to the evaluation on their bases. For the precise definition we refer the reader to \cite[Definition 1.1.2]{LevineMorel}.

Fundamental examples of oriented cohomology theories are the Chow ring $CH^*$ and $K^0[\beta,\beta^{-1}]$, a graded version of the Grothendieck ring of vector bundles $K^0$ obtained by tensoring with the ring of Laurent polynomials $\ZZ[\beta,\beta^{-1}]$, with $\text{deg\,}\beta=-1$. Many of the general features of oriented cohomology theories can be seen already in these two examples. For instance, it is possible to define a theory of Chern classes $c_i^A$, which shares many of the properties of the one given in the Chow ring.

 In order to understand in which ways the concept of oriented cohomology theory is more general, it can be useful to pay a closer look at the behaviour of the first Chern class of line bundles. It is well known that $c_1^{CH}$ behaves linearly with respect to tensor product: for line bundles $L$ and $M$ over $X\in \SM$ one has 
\[
c^{CH}_1(L\otimes M)=c_1^{CH}(L)+c_1^{CH}(M).
\]
Although this equality does not necessarily hold if we replace the Chow ring with another oriented cohomology theory $A^*$, it is still possible to express $c_1^A(L\otimes M)$ in terms of the first Chern classes of the factors. However, for this one has to replace the usual sum with a formal group law $F_A$.

Before we continue with our discussion, let us now briefly recall the definition of a formal group law. A pair $(R,F_R)$, where $R$ is a ring and $F_R(u,v)\in R[[u,v]]$, is said to be a commutative formal group law of rank 1 if it satisfies the following conditions:
\begin{align}\label{eq linear}
i)\ F_R(u,0)&=F_R(0,u)=u\in R[[u]];
\\ii)\ F_R(u,v)&=F_R(v,u)\in R[[u,v]]; \label{eq comm}
\\iii)\ F_R(u,F_R(v,w))&=F_R(F_R(u,v),w)\in R[[u,v,w]]. \label{eq assoc}
\end{align}     
It should be noticed that $ii)$ and $iii)$ can be viewed as analogues of the commutative and associative properties for groups. Actually, this analogy can be pushed further since there exists also a notion of formal inverse, a power series $\chi(u)\in R[[u]]$ such that $F_R(u, \chi(u))=0$. Let us finish this digression by mentioning that there exists a universal formal group law $F$, defined over the Lazard ring $\Laz$, from which all other ones can be derived. We will return to this point in Section \ref{subsect Lazard}.

As we hinted at before, every cohomology theory $A^*$ has an associated formal group law $F_A$, defined over its coefficient ring $A^*(\spec k)$, such that in $A^1(X)$ one has
\[
c_1^A(L\otimes M)=F_A(c_1^A(L),c_1^A(M))
\]
for any choice of line bundles $L$ and $M$ over the given scheme $X$. As we have seen $F_{CH}(u,v)=u+v$, while for $K^0[\beta,\beta^{-1}]$ one has $F_{K^0[\beta,\beta^{-1}]}=u+v-\beta\cdot uv$.

The main achievement of Levine and Morel concerning oriented cohomology theories consists in the construction of algebraic cobordism, denoted $\Omega^*$, which they identify as universal in the following sense.

\begin{thm}[\protect{\cite[Theorems 1.2.6 and 1.2.7]{LevineMorel}}]
Let $k$ be a field of characteristic 0. $\Omega^*$ is universal among oriented cohomology theories on $\SM$. That is, for any other oriented cohomology theory $A^*$ there exists a unique morphism 
$$\vartheta_A:\Omega^*\rightarrow A^*$$
of oriented cohomology theories. Furthermore, its associated formal group law $(\Omega^*(\spec k),F_\Omega)$ is isomorphic to $(\Laz,F)$, the universal one.
\end{thm}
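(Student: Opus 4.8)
\emph{Approach.} I would prove both assertions together by constructing $\Omega^*$ explicitly and showing the construction leaves no freedom. For $X\in\SM$ let $\mathcal Z^*(X)$ be the free abelian group on symbols $[Y\xrightarrow{f}X;L_1,\dots,L_r]$ with $Y$ smooth irreducible, $f$ projective and $L_1,\dots,L_r$ line bundles on $Y$, placed in codimension $\dim X-\dim Y+r$. One equips $\mathcal Z^*$ with the obvious push-forwards, pull-backs along transverse morphisms, and first-Chern operators $\widetilde c_1(L)$, and defines $\Omega^*$ to be the quotient of $\mathcal Z^*$ by the relations that (i) force the $\widetilde c_1(L)$ to behave like Chern classes (a symbol with a transversal section equals the class of the corresponding zero locus) and (ii) impose $\widetilde c_1(L\otimes M)=F_\Omega\big(\widetilde c_1(L),\widetilde c_1(M)\big)$ for one fixed power series $F_\Omega$ over $\Omega^*(\spec k)$. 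Already here the only possible formula for a morphism $\vartheta_A\colon\Omega^*\to A^*$ is fixed, namely $[Y\xrightarrow{f}X;L_1,\dots,L_r]\mapsto f_*\big(c_1^A(L_1)\cdots c_1^A(L_r)\big)$.

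\emph{$\Omega^*$ is an oriented cohomology theory --- the main obstacle.} The first substantial step is to check that $\Omega^*$ so defined satisfies Levine--Morel's axioms. Functoriality and the projection formula are formal, but extended homotopy invariance, the projective bundle formula, and above all the right-exact localization sequence $\Omega^*(Z)\to\Omega^*(X)\to\Omega^*(X\setminus Z)\to 0$ for a closed embedding $Z\hookrightarrow X$ are not; their proofs use resolution of singularities to replace closures of subvarieties by smooth models and to move cycles into good position, which is precisely where the hypothesis that $k$ has characteristic zero enters. This, together with the companion fact $\Omega^*(X)\otimes_{\Laz}\ZZ\cong CH^*(X)$ used to control $\Omega^*$ in small degrees, is the technical core of the whole result and the part I expect to be hardest; in \cite{LevineMorel} it occupies most of the book.

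\emph{Universality of $\Omega^*$.} Granting that, fix an oriented cohomology theory $A^*$. Uniqueness of $\vartheta_A$ is immediate: any morphism of oriented cohomology theories sends $[Y\xrightarrow{f}X;L_1,\dots,L_r]=f_*\big(\widetilde c_1(L_1)\cdots\widetilde c_1(L_r)\big)$ to $f_*\big(c_1^A(L_1)\cdots c_1^A(L_r)\big)$, and such classes generate $\Omega^*(X)$ (again by resolution of singularities). For existence, the displayed recipe is clearly additive and commutes with $f_*$, $g^*$ and $\widetilde c_1$, so it suffices to see it kills the defining relations of $\Omega^*$: relation (i) holds because in $A^*$ the first Chern class of a line bundle with a transversal section cuts out the zero scheme, and relation (ii) holds because $A^*$ already carries \emph{some} formal group law $F_A$. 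Hence $\vartheta_A$ descends, and it is automatically a ring homomorphism compatible with push-forwards and pull-backs.

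\emph{Universality of the formal group law.} As $F_\Omega$ is a formal group law over $R:=\Omega^*(\spec k)$, classifying it yields a ring map $\Phi\colon\Laz\to R$ with $\Phi_*(F)=F_\Omega$, and it remains to show $\Phi$ is bijective. Surjectivity: $R$ is generated as a ring by classes of smooth projective $k$-varieties, and using the generalized degree formula one expresses the coefficients of $F_\Omega$ through the classes of projective spaces and Milnor-type hypersurfaces, so these generators lie in $\im\Phi$. Injectivity: I would combine Lazard's theorem $\Laz\cong\ZZ[x_1,x_2,\dots]$ with comparison to topology --- for $k\subseteq\CC$ there is a realization map $R\to\Omega^*(\spec\CC)\to MU^{2*}(\pt)$ compatible with formal group laws, and Quillen's theorem identifies $MU^{2*}(\pt)\cong\Laz$ with the universal law, so the composite $\Laz\xrightarrow{\Phi}R\to MU^{2*}(\pt)\cong\Laz$ is the identity; hence $\Phi$ is split injective for such $k$, and reducing the general characteristic-zero field to this case by base change in $k$ finishes the argument. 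Then $\Phi$ is a ring isomorphism carrying $F$ to $F_\Omega$, i.e.\ $(\Omega^*(\spec k),F_\Omega)\cong(\Laz,F)$.
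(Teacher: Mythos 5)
This theorem is not proved in the paper at all: it is quoted verbatim as background from Levine--Morel (Theorems 1.2.6 and 1.2.7), so there is no in-paper argument to compare yours against. Judged on its own terms, your proposal is a broadly faithful table of contents for the Levine--Morel proof: the presentation of $\Omega^*$ by cobordism cycles $[Y\to X;L_1,\dots,L_r]$ modulo a section relation and a formal-group-law relation, the observation that the formula $f_*\bigl(c_1^A(L_1)\cdots c_1^A(L_r)\bigr)$ is forced and hence gives uniqueness, the identification of the localization sequence, homotopy invariance and the projective bundle formula (all resting on resolution of singularities, whence characteristic $0$) as the technical core, and the identification of the coefficient ring with $\Laz$ via surjectivity from the degree formula plus injectivity from topological realization and Quillen's theorem on $MU$. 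These are indeed the load-bearing components of the actual proof.

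Two caveats. First, there is a genuine circularity in your construction as literally stated: you cannot impose the relation $\widetilde c_1(L\otimes M)=F_\Omega\bigl(\widetilde c_1(L),\widetilde c_1(M)\bigr)$ ``for one fixed power series $F_\Omega$ over $\Omega^*(\spec k)$,'' because $\Omega^*(\spec k)$ is the ring being defined. Levine and Morel resolve this by first tensoring the pre-theory with the Lazard ring and imposing the \emph{universal} formal group law $F$ with coefficients in $\Laz$; the classifying map $\Laz\to\Omega^*(\spec k)$ then exists by construction, and the content of Theorem 1.2.7 is that it is bijective. Without this device your relation (ii) does not define anything, and the uniqueness argument for $\vartheta_A$ (which must also send the adjoined $\Laz$-coefficients to the coefficients of $F_A$) is incomplete. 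Second, everything you label as ``formal'' or defer --- in particular the right-exactness of localization and the surjectivity half of $\Laz\cong\Omega^*(\spec k)$ --- is precisely where the several hundred pages of work lie; as written this is an accurate outline of a known proof rather than a proof.
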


For us a very important feature of $\Omega^*$ is that it allows to construct new oriented cohomology theories. In fact, for every $(R,F_R)$ it is sufficient to consider
$$\Omega^*_{(R,F_R)}:=\Omega^*\otimes_\Laz R$$ 
to obtain a theory whose associated formal group law is the given one and, essentially by design, it is universal among such theories. For instance, the formal group law $u\oplus v=u+v-\beta\cdot uv$ defined over $\ZZ[\beta]$ gives rise to connective $K$-theory, denoted $CK^*$, which has been considered in \cite{ThomHudson,HIMN}.
\subsection{Chern classes}
The existence of a theory of Chern classes for an oriented cohomology theory $A^*$ is a direct consequence of the projective bundle formula. This claims that, for any given bundle $E\rightarrow X$ of rank $n$, one has the following isomorphism of $A^*(X)$-modules: 
$$A^*(\PP (E))\simeq \bigoplus_{i=0}^{n-1}\xi^iA^*(X).$$   
 Here $\xi=c_1(\calO(1))$ and $\calO(1)$ stands for the universal quotient line bundle over $\PP^*(E)$. Up to a sign Chern classes should be viewed as the coefficients of the expansion of $\xi^n$ with respect to the basis $\{1,\xi,...,\xi^{n-1}\}$. More precisely, in $A^n(X)$ one has the following defining equality
\begin{align}\label{eq Chern} 
\xi^n-c_1(E)\xi^{n-1}+\trecd+(-1)^n c_n(E)=0.
\end{align}
It can be convenient to assemble together the Chern classes in the Chern polynomial $c_t(E):=\sum_{i=0}^n c_i(E)t^i$. In fact, one has the following proposition, usually known as the Whitney product formula.
\begin{prop}[\protect{\cite[Proposition 4.1.15 (3)]{LevineMorel}}]
Let 
$$0\rightarrow E\rightarrow F\rightarrow G\rightarrow 0$$
 be a short exact sequence of vector bundles over $X\in \SM$. Then in $A^*(X)[t]$ one has 
$$c_t(F)=c_t(E)\cdot c_t(G).$$
\end{prop}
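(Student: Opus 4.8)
The plan is to deduce the Whitney formula from the defining relation \eqref{eq Chern} by the splitting principle, reducing first to the case in which the sub-bundle is a single line bundle. First I would record the two standard consequences of the projective bundle formula that I will use: Chern classes are compatible with pull-back, $g^*c_t(V)=c_t(g^*V)$; and for any bundle $V\to X$, iterating the formation of $\PP^*(-)$ — at each stage replacing the base by the projective bundle and $V$ by the kernel of the tautological quotient — yields $q\colon\Fl(V)\to X$ along which $q^*$ is a split monomorphism of $A^*(X)$-modules and $q^*V$ carries a complete filtration with line-bundle subquotients. Applying this to $E$ and to $G$ and passing to the fibre product of the two towers gives $p\colon Y\to X$ with $p^*$ injective such that $p^*E$, $p^*G$, and hence also $p^*F$ (splice the two filtrations along $0\to p^*E\to p^*F\to p^*G\to 0$), are filtered by line subbundles; since $c_t$ commutes with $p^*$ we may assume all three bundles have such filtrations. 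Now induct on $\rk E$: the case $\rk E=0$ is trivial, and with $L=E_1\subset E$ the bottom line subbundle and $E'=E/E_1$, everything reduces to the \emph{Key Lemma}: if $0\to L\to F\to G\to 0$ with $L$ a line bundle, then $c_t(F)=(1+c_1(L)t)\,c_t(G)$. Indeed, applying the Key Lemma to $0\to L\to F\to F/L\to 0$ and to $0\to L\to E\to E'\to 0$, and the inductive hypothesis to $0\to E'\to F/L\to G\to 0$, gives $c_t(F)=(1+c_1(L)t)\,c_t(F/L)=(1+c_1(L)t)\,c_t(E')\,c_t(G)=c_t(E)\,c_t(G)$.

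For the Key Lemma, set $\pi\colon\PP^*(F)\to X$, $\xi=c_1(\calO(1))$ and $g=\rk G$, so that $A^*(\PP^*(F))=\bigoplus_{i=0}^{g}\xi^i A^*(X)$ is free of rank $g+1$. The quotient $F\twoheadrightarrow G$ realizes $j\colon\PP^*(G)\hookrightarrow\PP^*(F)$ as the zero locus of the section of $\calO(1)\otimes L^{\vee}$ induced by $L\hookrightarrow\pi^*F\twoheadrightarrow\calO(1)$; this section is transverse to the zero section and $\calO(1)$ restricts to $\calO_{\PP^*(G)}(1)$, so $j_*(1)=c_1(\calO(1)\otimes L^{\vee})$, while pulling \eqref{eq Chern} for $G$ back along $j$ shows that $R:=\sum_{i=0}^{g}(-1)^i\xi^{\,g-i}\pi^*c_i(G)$ satisfies $j^*R=0$. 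Hence, by the projection formula, $c_1(\calO(1)\otimes L^{\vee})\cdot R=j_*(j^*R)=0$. One then wants to conclude that $(\xi-c_1(L))\cdot R=0$, expand the left-hand side as a monic degree-$(g+1)$ polynomial in $\xi$, and identify it with the defining relation \eqref{eq Chern} for $F$ using freeness of $A^*(\PP^*(F))$; comparing coefficients yields $c_i(F)=c_i(G)+c_1(L)c_{i-1}(G)$ for all $i$, which is the Key Lemma.

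The main obstacle is precisely the passage from $c_1(\calO(1)\otimes L^{\vee})\cdot R=0$ to $(\xi-c_1(L))\cdot R=0$. In $CH^*$ one has $c_1(\calO(1)\otimes L^{\vee})=\xi-c_1(L)$ on the nose and the argument closes immediately; for a general theory the two classes differ by the higher-order terms of the formal group law, and $c_1(\calO(1)\otimes L^{\vee})$ is only \emph{some} generator of the relevant ideal. To get around this I would reduce to the universal case: since $\vartheta_A\colon\Omega^*\to A^*$ is a morphism of oriented cohomology theories it commutes with Chern classes, pull-backs and push-forwards, so it suffices to prove the Key Lemma for $\Omega^*$. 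For $\Omega^*$ the localization exact sequence is available; combined with the observation that the open complement $\PP^*(F)\smallsetminus\PP^*(G)$ is an affine bundle over $X$ — a torsor under $\mathcal{H}\!om(G,L)$ — on which $\calO(1)$ becomes the pull-back of $L$, this identifies the kernel of restriction to the complement both with $(c_1(\calO(1)\otimes L^{\vee}))$ and with $(\xi-c_1(L))$, and the torsion-freeness afforded by the polynomial structure of the Lazard ring lets one upgrade the coincidence of these ideals to the equality $(\xi-c_1(L))\cdot R=0$ required above. I expect this last step — making the intersection-theoretic computation work in the presence of a nontrivial formal group law — to be the bulk of the work, the splitting-principle reduction being formal once the projective bundle formula and the elementary properties of Chern classes are in place.
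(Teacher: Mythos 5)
The paper offers no proof of this proposition at all: it is imported verbatim from Levine--Morel, so there is nothing internal to compare against. Judged on its own terms, your proposal follows the standard Grothendieck splitting-principle strategy and its skeleton is sound: the reduction to the Key Lemma, the identification $j_*(1)=c_1(\calO(1)\otimes L^{\vee})$ for the divisor $j\colon\PP^*(G)\hookrightarrow\PP^*(F)$, the vanishing $c_1(\calO(1)\otimes L^{\vee})\cdot R=0$ via the projection formula, and the final comparison of monic degree-$(g+1)$ relations using freeness are all correct, and you have correctly isolated where the real difficulty lies.

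The step that needs repair is exactly the one you flag, and as written it has two problems. First, ``identifying the kernel of restriction with $(\xi-c_1(L))$'' is circular: proving that this kernel is generated by $\xi-c_1(L)$ amounts to proving that the defining relation (\ref{eq Chern}) for $F$ is divisible by $\xi-c_1(L)$, which is essentially the Key Lemma itself. Second, $\Omega^*(X)$ is not torsion-free over $\Laz$ in general, so that appeal does no work. Fortunately you do not need the equality of ideals, only one containment: the localization sequence together with the projection formula (and the surjectivity of $j^*$, which follows from the projective bundle formula) gives $\ker(\mathrm{res}_U)=\bigl(c_1(\calO(1)\otimes L^{\vee})\bigr)$ as a principal ideal, while the extended homotopy property applied to the $\mathcal{H}om(G,L)$-torsor $U$ shows $\xi-c_1(L)\in\ker(\mathrm{res}_U)$; hence $\xi-c_1(L)=w\cdot c_1(\calO(1)\otimes L^{\vee})$ for some $w$, and therefore $(\xi-c_1(L))\cdot R=w\cdot c_1(\calO(1)\otimes L^{\vee})\cdot R=0$, which is all you need. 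Alternatively---and this is the cleaner route, closer to how Levine--Morel and Panin actually argue---you can avoid the localization sequence, and hence the detour through $\Omega^*$, entirely: since $F_A(v,\chi_A(v))=0$, one has a factorization $F_A(u,\chi_A(v))=(u-v)\cdot\bigl(1+\text{higher-order terms}\bigr)$ in $A^*(\spec k)[[u,v]]$, and because first Chern classes of line bundles on smooth schemes are nilpotent (a consequence of the projective bundle formula and homotopy invariance), the second factor evaluates to a unit on $\PP^*(F)$; thus $c_1(\calO(1)\otimes L^{\vee})$ and $\xi-c_1(L)$ differ by a unit, the relation transfers immediately, and the argument works uniformly in any oriented cohomology theory.
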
 
It is important to notice that the Whitney formula guarantees that the assignment $E\mapsto c_t(E)$ is actually well defined at the level of the Grothendieck group $K^0(X)$ and, as a consequence, one can associate a Chern polynomial to any of its element. In particular for $[E]-[F]$ one obtains
\begin{align} \label{eq virtual}
c_t(E-F)=\frac{c_t(E)}{c_t(F)}.
\end{align}
\subsection{The construction of the Lazard ring and its multiplicative structure} \label{subsect Lazard}
In order to be able to express the formal group laws that give rise to the infinitesimal theories, it is convenient to recall some known facts about the structure of $\Laz$ and its definition. We will essentially follow the treatment given in \cite[Part II, \S 5 and \S 7]{StableAdams}, trying to make it compatible with the notations used in \cite[Chapter 1]{LevineMorel}.
 
Let us consider the graded polynomial ring $\ZZ[A_{i,j}]_{(i,j)\in\NN^2}$ with $\text{deg}\,A_{i,j}=-i-j+1$ and the power series $$\widetilde{F}(u,v)=\sum_{i,j}A_{i,j}u^i v^j.$$ The Lazard ring can be defined as the quotient  obtained by forcing $\widetilde{F}$ to be a formal group law and for this it is sufficient to impose conditions (\ref{eq linear}),(\ref{eq comm}), and (\ref{eq assoc}). If we set $a_{i,j}=\overline{A_{i,j}}$, then the formal group law of $\Laz$ can be written as
\[
F(u,v)=u+v+\sum_{i,j\neq 0}a_{i,j}u^i v^j.
\]
An obvious consequence of this construction is that $(\Laz,F)$ represents the universal formal group law: for any other pair $(R,F_R)$ there exists a unique morphism $\varPhi_{(R,F_R)}:\Laz\rightarrow R$ mapping the coefficients $a_{i,j}$ to those of $F_R$. From now on we will write $\varPhi_R$ instead of the more precise $\varPhi_{(R,F_R)}$. It is possible to make use of this universality to investigate the structure of $\Laz$ by embedding it into a polynomial ring. For this one sets $R=\ZZ[b_1,b_2,\tred,b_i,\tred]$, with $\text{deg}\,b_i=-i$ and considers the two power series 
\[
exp(u)=\sum_{i=0}^\infty b_i u^i \qquad \text{and}\qquad
log(u)=\sum_{i=0}^\infty m_i u^i,
\]
where $b_0=1$ and, as one requires $log$ to be the inverse of $exp$, the coefficients $m_i$'s are uniquely determined. Finally, with these two power series at hand it is possible to define a formal group law by setting 
\[
F_R(u,v)=exp\Big(log(u)+log(v)\Big)
\]
and obtain $\varPhi_R$. A priori it is not clear that this map is injective, however it is possible to draw this and other conclusions after studying the structure of $\Laz$ and $R$ modulo decomposable elements.
If $J_\Laz$ denotes the ideal of $\Laz$ which consists of all the elements of negative degree, we then refer to $J_\Laz^2$ as the decomposable elements and, in the same fashion, we can also consider $J_R^2$. Although the coefficients of $F_R$ are not straightforward to express, the situation simplify considerably if one looks at $F_{R/J_R^2}$. In fact, one has the following proposition.
\begin{prop}
\begin{enumerate}
\item[1)] $\varPhi_{R/J_R^2}(a_{i,j})=\binom{i+j}{i} \overline{b_{i+j-1}}.$
\item[2)] As a group, the image of $\varPhi_{R/J_R^2}$ can be described as follows: 
\[
\text{Im}\,\varPhi_{R/J_R^2}=\bigoplus_{i=0}^\infty\   d_i \overline{ b_i}\cdot\ZZ,
\]
where one sets $d_i=p$ if $i=p^e-1$ for some integer $e$ and a prime $p$, and otherwise let $d_i=1$.
\item[3)] The induced morphism $\overline{\varPhi_{R_/J_R^2}}:\Laz/J_\Laz^2\rightarrow R/J_R^2$ is injective.  
\end{enumerate}
\end{prop}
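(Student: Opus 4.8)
The proof consists of three parts, each building on the previous one. The plan is as follows.

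For part 1), I would work in the ring $R/J_R^2$, where decomposables vanish, and directly compute the formal group law $F_R(u,v) = \exp(\log(u) + \log(v))$ modulo decomposables. Since $\log$ is the compositional inverse of $\exp(u) = \sum b_i u^i$, one finds $\log(u) = u - b_1 u^2 + (\text{decomposable})u^2 + \dots$; more precisely, modulo $J_R^2$ we have $m_i = -b_i$ for $i \geq 1$, as follows by induction from the relation $\sum_k b_k (\sum_j m_j u^j)^k = u$ by collecting the terms that are not products of two positive-degree generators. Then, working modulo $J_R^2$, $\log(u) + \log(v) = (u+v) - \sum_{i \geq 1} b_i(u^i + v^i)$, and applying $\exp$ and again discarding decomposables yields the coefficient of $u^i v^j$ (for $i,j \geq 1$) as $\binom{i+j}{i}\overline{b_{i+j-1}}$: the binomial coefficient arises from expanding $b_{i+j-1}(\log u + \log v)^{i+j-1} \equiv b_{i+j-1}(u+v)^{i+j-1}$, and the lower-order corrections from $\exp$ applied to the $-\sum b_i(u^i+v^i)$ part are all decomposable. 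This identifies $\varPhi_{R/J_R^2}(a_{i,j})$ as claimed.

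For part 2), the image of $\varPhi_{R/J_R^2}$ is the subgroup of $\bigoplus_k \overline{b_k}\,\ZZ$ generated, in degree $-k$, by all the integers $\binom{i+j}{i}$ with $i+j-1 = k$, i.e.\ $i + j = k+1$, $i,j \geq 1$. So in degree $-k$ the image is $g_k \overline{b_k}\,\ZZ$ where $g_k = \gcd\{\binom{k+1}{i} : 1 \leq i \leq k\}$. The standard number-theoretic fact (going back to Kummer) is that this gcd equals $p$ if $k+1$ is a power of a prime $p$, and equals $1$ otherwise; I would either cite this or prove it quickly by examining $p$-adic valuations of binomial coefficients via Kummer's theorem on carries. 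Rewriting $k+1 = p^e$ as $k = p^e - 1$ gives exactly the definition of $d_k$, so $\Im \varPhi_{R/J_R^2} = \bigoplus_k d_k \overline{b_k}\,\ZZ$.

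For part 3), I would use the standard description of $\Laz/J_\Laz^2$: it is the free abelian group on the classes $\overline{a_{i,j}}$ subject only to the symmetry $a_{i,j} = a_{j,i}$ and the linear relations among them coming from associativity modulo decomposables, and it is a classical fact that $\Laz/J_\Laz^2$ is free abelian with one generator $t_k$ in each degree $-k$ (this is part of Lazard's theorem, or can be extracted from the treatment in \cite{StableAdams}). The map $\overline{\varPhi_{R/J_R^2}}$ sends the degree $-k$ part to a subgroup of $\overline{b_k}\,\ZZ \cong \ZZ$, and by part 2) this subgroup is $d_k \overline{b_k}\,\ZZ$, which is nonzero; since a homomorphism $\ZZ \to \ZZ$ with nonzero image is automatically injective, and this holds in every degree, $\overline{\varPhi_{R/J_R^2}}$ is injective. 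The one point requiring care — and the main obstacle — is justifying that $\Laz/J_\Laz^2$ is torsion-free with rank exactly one in each degree, so that "nonzero image" upgrades to "injective"; if one does not wish to invoke the full strength of Lazard's structure theorem, one can instead argue that $\varPhi_{R/J_R^2}$ restricted to degree $-k$ lands in a rank-one free group and that the generator $\overline{a_{1,k}}$ (say) maps to $(k+1)\overline{b_k} \neq 0$, then show by the associativity relations modulo decomposables that $\Laz/J_\Laz^2$ in degree $-k$ is generated by a single $\overline{a_{i,j}}$, reducing everything to the rank-one case. I expect verifying this reduction — i.e.\ that the associativity relations force all $\overline{a_{i,j}}$ of a fixed total degree to be proportional over $\ZZ$ — to be the technical heart of the argument, though it is entirely elementary.
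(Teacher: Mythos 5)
The paper itself does not prove this proposition: its ``proof'' is a citation, observing that 1) and 2) restate Lemma 7.9 ii)--iii) of Adams' book and that 3) is equivalent to his Corollary 14 read degree by degree. Your proposal instead reconstructs the content. Parts 1) and 2) are correct: computing $F_R(u,v)=\exp(\log(u)+\log(v))$ modulo $J_R^2$ via $m_i\equiv -b_i$ does yield $\varPhi_{R/J_R^2}(a_{i,j})=\binom{i+j}{i}\,\overline{b_{i+j-1}}$ (modulo the paper's own off-by-one in the exponents of $\exp$, which you implicitly correct), and identifying the degree $-k$ part of the image with $g_k\overline{b_k}\,\ZZ$, $g_k=\gcd\{\binom{k+1}{i}: 1\leq i\leq k\}$, together with the classical evaluation of that gcd, is exactly right.

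Part 3) is where there is a genuine gap. Citing Lazard's structure theorem (your first option) is legitimate and is in effect what the paper does; but your proposed ``elementary'' alternative does not work as described. First, it is false in general that the degree $-k$ component of $\Laz/J_\Laz^2$ is generated by a single $\overline{a_{i,j}}$: for $k=5$ the generators map to $6,15,20,15,6$ times $\overline{b_5}$, whose gcd is $1$, so by your own part 2) the image is all of $\overline{b_5}\,\ZZ$, which no single $\overline{a_{i,j}}$ can reach --- the cyclic generator is necessarily an integer combination of several of them. Second, even the weaker claim that the $\overline{a_{i,j}}$ of fixed degree are pairwise proportional would not suffice: to upgrade ``nonzero image in $\overline{b_k}\,\ZZ\cong\ZZ$'' to injectivity you must know the degree $-k$ component is infinite cyclic, in particular torsion-free, since torsion classes would be invisible in $\ZZ$. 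The statement you actually need --- that the symmetric $2$-cocycles in each homogeneous degree $n$ form a free group of rank one generated by $\frac{1}{d_n}\left[(u+v)^n-u^n-v^n\right]$ --- is precisely Lazard's comparison lemma, the nontrivial core of the whole theory, proved in Adams through a several-step case analysis according to whether $n$ is a prime power. Declaring this ``entirely elementary'' and leaving it unverified is the real gap; either carry out Lazard's lemma or cite it, as the paper does.
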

\begin{proof}
1) and 2) are restatements  of \cite[Lemma 7.9 ii) and iii)]{StableAdams}, while 3) is equivalent to \cite[Corollary 14]{StableAdams} provided one considers separately the different graded pieces.
\end{proof}
Let us observe that, in view of the third part of the proposition, we can identify $Q:=\Laz/J_\Laz^2$ with the image described in the second part and, moreover, the first statement provides us with an explicit description of the formal group law associated to the projection $\Laz\rightarrow Q$. In fact, it is equivalent to saying that
\begin{eqnarray}\label{eq FGL}
F_Q(u,v)=u+v+\sum_{i,j\neq 0}\binom{i+j}{i} \frac{1}{d_i}\mathfrak{\alpha}_{i+j-1}\cdot u^iv^j,
\end{eqnarray}
where $\alpha_i\in Q^{-i}$ is the unique element which maps to $d_i\overline{b_i}$. It is worth noticing that, from a multiplicative point of view, the different graded components of $Q^{-i}$ are independent of each other, so it is possible to study them separately. In particular, since we are interested in the graded components of the formal group law, we will consider the rings $Q_i:=Q^0\oplus Q^{-i}$.   

We complete our discussion on the Lazard ring by presenting another consequence of the previous proposition. In fact the study of $Q$ and of $R/J_R^2$ allows one to give a precise description of the multiplicative structure of $\Laz$.
\begin{thm}
\begin{itemize}
\item[(1)]The Lazard ring is isomorphich to the polynomial ring $\ZZ[x_1,x_2,\tred,x_i,\tred]$ with $\text{deg}\, x_i=-i$, where $x_i$ is mapped to any element of the preimage $\varPhi^{-1}_{R/J_R^2}(d_i\overline{b_i})$.
\item[(2)] The map $\varPhi_R$ is a monomorphism.
\end{itemize}
\end{thm}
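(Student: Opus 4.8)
This is a classical structure theorem for the Lazard ring, and the natural strategy is to deduce it from the preceding proposition together with a standard graded-ring argument. I would first observe that part (1) formally implies part (2): if $\Laz \cong \ZZ[x_1,x_2,\dots]$ with the $x_i$ chosen as indicated, then in each graded degree $\Laz^{-n}$ has a monomial basis, and the proposition (part 3, applied in each degree, together with an induction on degree) shows that $\varPhi_R$ sends this basis to a linearly independent set; hence $\varPhi_R$ is injective. So the real content is part (1).

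To prove part (1), the key is to show that the classes $\overline{x_i} \in \Laz/J_\Laz^2$ (i.e.\ the images $d_i\overline{b_i}$ under $\varPhi_{R/J_R^2}$, pulled back) generate $\Laz$ as a $\ZZ$-algebra, and that there are no relations among them. Generation follows by a standard inductive argument on the grading: suppose we have shown that the subalgebra generated by $x_1,\dots,x_{n-1}$ surjects onto $\Laz^{-i}$ for all $i<n$; then for $\Laz^{-n}$, any element is congruent modulo $J_\Laz^2$ to an integer multiple of $\overline{x_n}$ — here one uses part (2) of the proposition, that $\mathrm{Im}\,\varPhi_{R/J_R^2}$ in degree $-n$ is exactly $d_n\overline{b_n}\cdot\ZZ$, so that $\Laz^{-n}/(J_\Laz^2)^{-n}$ is the infinite cyclic group generated by $\overline{x_n}$ — and the difference lies in $(J_\Laz^2)^{-n}$, which by definition is spanned by products of lower-degree elements, hence lies in the subalgebra generated by $x_1,\dots,x_{n-1}$ by the inductive hypothesis. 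This shows the natural map $\ZZ[x_1,x_2,\dots]\to\Laz$ is surjective.

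For injectivity of $\ZZ[x_1,x_2,\dots]\to\Laz$, the cleanest route is to compare with $R=\ZZ[b_1,b_2,\dots]$. By part (3) of the proposition, $\overline{\varPhi_{R/J_R^2}}$ is injective, and combined with part (2) it identifies $\Laz/J_\Laz^2$ with $\bigoplus_i d_i\overline{b_i}\ZZ \subseteq R/J_R^2$. One then checks, again by induction on degree, that a monomial in the $x_i$'s of degree $-n$ maps under $\varPhi_R$ to $R$ with ``leading term'' (in a suitable monomial ordering, or just modulo higher-filtration pieces) a corresponding monomial in the $d_i\overline{b_i}$'s; since the monomials in the $b_i$ are $\ZZ$-linearly independent in $R$ and the $d_i$ are nonzero, a nontrivial $\ZZ$-linear combination of monomials in the $x_i$ cannot map to zero. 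Equivalently: a graded-piece rank count shows $\Laz^{-n}$ has rank equal to the number of monomials of degree $-n$, forcing the surjection $\ZZ[x_1,x_2,\dots]^{-n}\to\Laz^{-n}$ to be an isomorphism, and then $\varPhi_R$ restricted to each graded piece is injective since it already is on $\Laz/J_\Laz^2$ and one lifts by the filtration.

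\emph{Main obstacle.} The delicate point is the passage from the ``associated graded'' statement (injectivity of $\overline{\varPhi_{R/J_R^2}}$, i.e.\ Proposition part 3) to the statement about $\Laz$ and $\varPhi_R$ themselves: one must argue that no cancellation can occur among products of generators in the full ring when it does not occur modulo decomposables. This is where the filtration/induction bookkeeping has to be set up carefully — one needs that $\Laz$ is a polynomial ring on the $x_i$ rather than merely that the $x_i$ generate it, and the standard trick is to count dimensions of graded pieces over $\QQ$ (or ranks over $\ZZ$) and match them against the free polynomial algebra, invoking that $\varPhi_R\otimes\QQ$ is already known to be an isomorphism onto its image by Lazard's classical computation $\Laz\otimes\QQ\cong\QQ[m_1,m_2,\dots]$. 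I would expect to cite \cite{StableAdams} (Corollary 14 and the surrounding discussion) for exactly this step rather than reprove it.
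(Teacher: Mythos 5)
The paper offers no argument here at all: its entire proof is the citation ``See \cite[Theorem 7.1 and 7.8]{StableAdams}'', so there is no internal proof to compare yours against. Your sketch is a correct reconstruction of the standard argument that lives in that reference: generation of $\Laz$ by the $x_i$ follows by induction on degree from part (2) of the preceding proposition (every homogeneous element agrees with an integer multiple of $x_n$ modulo $J_\Laz^2$, and decomposables are handled by induction), and freeness plus the injectivity of $\varPhi_R$ come from either the leading-term comparison in $R=\ZZ[b_1,b_2,\dots]$ or the rank count against $\Laz\otimes\QQ\cong\QQ[m_1,m_2,\dots]$, the two routes you describe. You also correctly isolate the one genuinely delicate step --- promoting the associated-graded statement (injectivity of $\overline{\varPhi_{R/J_R^2}}$) to a statement about $\Laz$ and $\varPhi_R$ themselves --- and your proposal to defer that bookkeeping to \cite{StableAdams} is exactly what the authors do, only wholesale. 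No gap relative to what the paper provides; if anything, your version supplies more detail than the original.
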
 
\begin{proof}
See \cite[Theorem 7.1 and 7.8]{StableAdams}.
\end{proof}
\subsection{Infinitesimal theories}
We are now ready to introduce the oriented cohomology theories that will be our object of study.
\begin{defn}
Let $n\in\NN$ be a strictly positive integer. The $n$-th infinitesimal oriented cohomology theory $I^*_n$ is defined as
\[
I^*_n:= \Omega^*\otimes_\Laz Q_n,
\]
where the tensor product is taken with respect to the classifying morphism $\varPhi_{Q_n}$.
In view of (\ref{eq FGL}) the associated formal group law, denoted $\boxplus$, is given by
\[
u\boxplus v:= F_{Q_n}(u,v)=u+v+\alpha_n\cdot \frac{1}{d_n}\sum_{i=1}^{n} \binom{n+1}{i}  u^iv^{n+1-i}.
\]
\end{defn}
The following lemma shows that, in the case of even infinitesimal theories, the formal group law has a special shape. Note that the formal inverse $\chi_{I_n}$ is denoted by $\boxminus$. 
\begin{lem}\label{lemFGL2m}
Suppose that $n=2m$ is even. Then 
\begin{align}\label{eq gamma}
u\boxplus v=(u+v)\left[ 1+\alpha_{2m}\left( \sum_{i=1}^{2m-1}\gamma^{(2m)}_i u^iv^{2m-i}\right) \right],
\  \text{with} \quad \gamma^{(2m)}_i=\frac{1}{d_{2m}}\left[\binom{2m}{i} -(-1)^i\right].
\end{align}
Moreover, one has $\boxminus u=-u$.
\end{lem}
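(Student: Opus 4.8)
The plan is to derive the even case directly from the definition of $\boxplus$ for general $n=2m$ and then show the bracketed factorization by manipulating binomial coefficients. First I would start from
\[
u\boxplus v=u+v+\frac{\alpha_{2m}}{d_{2m}}\sum_{i=1}^{2m} \binom{2m+1}{i}  u^iv^{2m+1-i},
\]
and try to pull out a factor of $(u+v)$ from the sum. Writing $\binom{2m+1}{i}=\binom{2m}{i}+\binom{2m}{i-1}$, the sum $\sum_{i=1}^{2m}\binom{2m+1}{i}u^iv^{2m+1-i}$ should reorganize as $v\cdot\sum_i\binom{2m}{i}u^iv^{2m-i}+u\cdot\sum_i\binom{2m}{i-1}u^{i-1}v^{2m-(i-1)}$; after reindexing the second sum, both become (almost) $\sum_{j=0}^{2m}\binom{2m}{j}u^jv^{2m-j}=(u+v)^{2m}$ up to the missing or extra endpoint terms ($j=0$ and $j=2m$). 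Keeping careful track of those boundary terms is where a little care is needed, but the upshot should be that the $\alpha_{2m}$-part of $u\boxplus v$ equals $(u+v)$ times $\frac{\alpha_{2m}}{d_{2m}}\big[(u+v)^{2m}-u^{2m}-v^{2m}\big]$ divided appropriately, i.e. the sum telescopes into a multiple of $(u+v)$.

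Once the factor $(u+v)$ is extracted, the remaining bracket is $\frac{1}{d_{2m}}\big[(u+v)^{2m}-u^{2m}-v^{2m}\big]=\sum_{i=1}^{2m-1}\frac{1}{d_{2m}}\binom{2m}{i}u^iv^{2m-i}$, which already looks close to the claimed $\sum_{i=1}^{2m-1}\gamma^{(2m)}_i u^iv^{2m-i}$ but with $\binom{2m}{i}$ in place of $\binom{2m}{i}-(-1)^i$. The discrepancy $-(-1)^i$ must therefore come from combining $u+v$ with the linear term: the point is that in the product $(u+v)\cdot[1+\alpha_{2m}(\cdots)]$ the ``$1$'' contributes $u+v$, and I should instead begin the computation by comparing $u\boxplus v$ against the guessed right-hand side and checking coefficient by coefficient. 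Concretely, expanding $(u+v)\left[1+\alpha_{2m}\sum_{i=1}^{2m-1}\gamma^{(2m)}_i u^iv^{2m-i}\right]$ and collecting the coefficient of $\alpha_{2m}u^iv^{2m+1-i}$ for $1\le i\le 2m$ gives $\gamma^{(2m)}_{i-1}+\gamma^{(2m)}_{i}$ (with the convention $\gamma^{(2m)}_0=\gamma^{(2m)}_{2m}=0$); I then need to verify this equals $\frac{1}{d_{2m}}\binom{2m+1}{i}$. Using $\gamma^{(2m)}_i=\frac{1}{d_{2m}}[\binom{2m}{i}-(-1)^i]$ this becomes $\frac{1}{d_{2m}}\big[\binom{2m}{i-1}+\binom{2m}{i}-(-1)^{i-1}-(-1)^i\big]=\frac{1}{d_{2m}}\binom{2m+1}{i}$ since $(-1)^{i-1}+(-1)^i=0$, which is exactly the coefficient appearing in the definition of $\boxplus$. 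This identity of rational numbers holds in $\ZZ$ after dividing by $d_{2m}$ because $d_{2m}$ divides $\binom{2m}{i}-(-1)^i$ (this is built into the construction of $Q_n$ via Proposition on $\mathrm{Im}\,\varPhi_{R/J_R^2}$, and one should remark on it), so the $\gamma^{(2m)}_i$ are genuinely elements of $Q_{2m}$.

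The main obstacle I anticipate is purely bookkeeping: matching the endpoint/boundary terms when reindexing the binomial sums, and making sure the convention $\gamma^{(2m)}_0=\gamma^{(2m)}_{2m}=0$ is consistent with the endpoint coefficients $i=1$ and $i=2m$ of $\boxplus$ (for $i=1$ one needs $\gamma^{(2m)}_1=\frac{1}{d_{2m}}\binom{2m+1}{1}-1$, and $\binom{2m}{1}-(-1)=2m+1$, so this checks out, and similarly for $i=2m$). For the formal inverse statement, I would substitute $v=-u$ into the factored form: the prefactor becomes $u+(-u)=0$, hence $u\boxplus(-u)=0$, which by uniqueness of the formal inverse gives $\boxminus u=-u$. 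This last step is immediate once the factorization is established, so essentially all the work is the binomial identity $\gamma^{(2m)}_{i-1}+\gamma^{(2m)}_i=\frac{1}{d_{2m}}\binom{2m+1}{i}$ together with the divisibility remark ensuring $\gamma^{(2m)}_i\in\ZZ[\alpha_{2m}]$.
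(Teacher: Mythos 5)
Your proof is correct and rests on the same elementary binomial identity as the paper's: the paper factors $(u+v)$ out of $(u+v)^{2m+1}-u^{2m+1}-v^{2m+1}$ using the standard odd-power factorization, while you expand the claimed right-hand side and verify via Pascal's rule that $\gamma^{(2m)}_{i-1}+\gamma^{(2m)}_{i}=\tfrac{1}{d_{2m}}\tbinom{2m+1}{i}$ (with $\gamma^{(2m)}_0=\gamma^{(2m)}_{2m}=0$ following from the stated formula), which is the same computation read in the opposite direction. Your deduction of $\boxminus u=-u$ by setting $v=-u$ in the factorized form and invoking uniqueness of the formal inverse is exactly the paper's argument.
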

\begin{proof}
It is easy to see that the coefficient of $\alpha_{2m}\frac{1}{d_{2m}}$ is equal to $(u+v)^{2m+1}-u^{2m+1}-v^{2m+1}$ and that one has the following equality 
$$u^{2m+1}+v^{2m+1}=(u+v)[u^{2m}-u^{2m-1}v+\trecd+(-1)^i u^{2m-i}v^i+\trecd-uv^{2m-1}+v^{2m}].$$
It is then sufficient to factor $(u+v)$ and use the binomial expasion to obtain the desired formula for $\gamma_i^{(2m)}$. The statement concerning the formal inverse, since it is uniquely defined, immediately follows from the factorization.
\end{proof}
\begin{exm}
For $n=2$ and $n=4$ the formal group laws are respectively given by: 
\begin{align*}
i)\ F_{Q_2}(u,v)=u+v+\alpha_2\Big(uv^2&+u^2 v\Big)=(u+v)\Big[1+\alpha_2\cdot uv\Big];\\
ii)\ F_{Q_4}(u,v)=u+v+\alpha_4\Big(uv^4+2\cdot u^2v^3
&+2\cdot u^3v^2+u^4v\Big)=(u+v)\Big[1+\alpha_4\Big(uv^3+u^2v^2+u^3 v\Big)\Big].
\end{align*}
\end{exm}
\section{Segre classes for infinitesimal theories}
The main aim of this section is to introduce the notion of Segre classes for $I_{2m}^*$ and to provide explicit descriptions in terms of symmetric functions. 

Let $e_i(x_1,\dots, x_e)$ and $h_i(x_1,\dots, x_e)$ be the $i$-th elementary and complete symmetric polynomials of variables $x_1,\dots, x_e$. If we regard $x_1,\dots, x_e$ as Chern roots of a vector bundle $E$, we write $e_i(E)$ and $h_i(E)$ instead. We also use the \emph{power sum symmetric polynomials}
\[
p_i(x_1,\dots, x_e) := \sum_{k=1}^e x_k^i
\]
for each integer $i\geq 1$. Again, if we regard $x_i$'s as Chern roots of a vector bundle $E$, we write $p_i(E)=p_i(x_1,\dots, x_e)$. In this paper, we use the convention $p_0(x_1,\dots, x_e)=1$.

We now give a geometric definition of Segre classes and describe them explicitly through the use of symmetric functions. 
\begin{defn}
Let $E\rightarrow X$ be a vector bundle of rank $e$ and $\PP^*(E)\stackrel{\pi}\rightarrow X$ the associated projective bundle of hyperplanes. Let $\tau:= c_1(\calO(1))\in I^1_{2m}(\PP^*(E))$. For each $k \geq -e+1$, we define the $k$-th (generalized) Segre classes by setting 
\[
\SE_k(E):=\pi_*(\tau^{k+e-1}).
\]
\end{defn}
\begin{prop}\label{prop Segre}
In $I_{2m}^*(X)$, the Segre classes of $E$ are given by:
\begin{align}\label{eq Segre}
\mathcal{S}_k(E)=h_{k}(E)-\alpha_{2m}\sum_{l=0}^{2m-1} (-1)^l \gamma_l^{(2m)}\cdot  p_l(E) \cdot h_{2m+k-l}(E),
\end{align}
where $\gamma_0^{(2m)}:=-\sum_{l=1}^{2m-1}(-1)^l \gamma_l^{(2d)}=\frac{2m+1}{d_{2m}}$.
\end{prop}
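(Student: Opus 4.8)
The plan is to compute $\pi_*(\tau^{k+e-1})$ directly by reducing the push-forward of a power of $\tau = c_1(\calO(1))$ to a symmetric-function computation in the Chern roots of $E$, using the formal group law of $I_{2m}^*$ described in Lemma \ref{lemFGL2m}. The starting point is the defining relation \eqref{eq Chern} for $\calO(1)$ on $\PP^*(E)$, namely $\sum_{i=0}^e (-1)^i c_i(E)\tau^{e-i} = 0$, which expresses $\tau^e$ in terms of lower powers of $\tau$ with coefficients in $I_{2m}^*(X)$; iterating gives $\tau^{k+e-1}$ as an $I_{2m}^*(X)$-linear combination of $1,\tau,\dots,\tau^{e-1}$, and then $\pi_*$ picks out the coefficient of $\tau^{e-1}$ (since $\pi_*(\tau^j) = 0$ for $j<e-1$ and $\pi_*(\tau^{e-1})=1$). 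Equivalently, and more efficiently, I would use the residue/Quillen-formula packaging already invoked in the introduction: writing the generating series $\sum_{k\geq -e+1}\SE_k(E)\,t^{k+e-1}$ as a residue $\operatorname{Res}_{\tau}\big(\tau^{e-1}/\prod_{i}(\text{factor in }\tau, x_i)\big)$ built from the Chern roots $x_1,\dots,x_e$ of $E$, so that everything is forced by the formal group law.

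The key steps, in order: (1) Express the generating function $\sum_{k\geq -e+1}\SE_k(E)\,t^{k}$ in closed form. Using the projective bundle formula and \eqref{eq Chern}, this series equals $\prod_{i=1}^e \dfrac{1}{1 - x_i t}$ in the additive (Chow) case; for $I_{2m}^*$ one must replace $1-x_i t$ by the series dictated by the formal group law, i.e. the denominator coming from $c_1(\calO(1)\otimes L_i^{\vee})$ where the $L_i$ are the line bundles with Chern roots $x_i$. (2) Using Lemma \ref{lemFGL2m}, $u\boxplus v = (u+v)[1+\alpha_{2m}\sum_{i=1}^{2m-1}\gamma_i^{(2m)} u^i v^{2m-i}]$ and $\boxminus u = -u$, compute $\tau \boxminus x_i = c_1(\calO(1)\otimes L_i^{\vee})$, which to first order in $\alpha_{2m}$ is $(\tau - x_i)\big[1 - \alpha_{2m}\sum_{i} \gamma_i^{(2m)}(-1)^{?}\tau^{?}x_i^{?}\big]$ — the sign bookkeeping here is exactly where $(-1)^l$ enters. (3) Since $\alpha_{2m}^2 = 0$, expand the product $\prod_i (\tau\boxminus x_i)^{-1}$ to first order in $\alpha_{2m}$: the leading term reproduces the $K$-theory-type computation giving $h_k(E)$ (the complete homogeneous symmetric polynomials), and the linear-in-$\alpha_{2m}$ correction is a single sum over $i$ of terms of the form $p_l(E)\, h_{\ast}(E)$ after one recognizes $\sum_{\text{roots}} x_i^l \cdot (\text{rest})$ as a power sum times a complete symmetric polynomial. (4) Finally extract the coefficient of $t^{k+e-1}$ (equivalently apply $\pi_*$), collect the correction into the stated sum $\alpha_{2m}\sum_{l=0}^{2m-1}(-1)^l\gamma_l^{(2m)}p_l(E)h_{2m+k-l}(E)$, and check the $l=0$ boundary term, verifying $\gamma_0^{(2m)} = -\sum_{l=1}^{2m-1}(-1)^l\gamma_l^{(2m)} = \frac{2m+1}{d_{2m}}$ from the identity $(u+v)^{2m+1} - u^{2m+1} - v^{2m+1}$ used in the proof of Lemma \ref{lemFGL2m} evaluated suitably.

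The main obstacle I expect is step (2)–(3): getting the power-sum correction term with the correct index $2m+k-l$ and the correct sign $(-1)^l$. Concretely, one must carefully rewrite $(\tau\boxminus x_i)^{-1}$ — whose denominator is $(\tau - x_i)$ times a unit that is $1$ plus an $\alpha_{2m}$-multiple of a polynomial in $\tau$ and $x_i$ homogeneous of degree $2m$ — as $\frac{1}{\tau - x_i} - \alpha_{2m}\cdot\frac{(\text{degree-}2m\text{ polynomial})}{\tau - x_i}$, then expand $\frac{1}{\tau-x_i} = \tau^{-1}\sum_{j\geq 0}(x_i/\tau)^j$ (after clearing to genuine power series in $t$), multiply over $i$, and isolate the term where exactly one factor contributes the $\alpha_{2m}$-correction. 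Recognizing that $\sum_i x_i^l \cdot [\text{symmetric function of all roots}]$ collapses to $p_l(E)\cdot h_{2m+k-l}(E)$ requires the standard identity $h_m = \sum_i x_i \, h_{m-1}(\text{same variables})$ iterated, and tracking that the exponent budget $k+e-1$ splits as (power of $x_i$) $+$ (degree of the $h$) $+$ (shift $2m$ from the formal group law) $= l + (2m+k-l)$. Once the symmetric-function identity $\sum_{i}x_i^l h_{N-l}(x_1,\dots,x_e) \text{-type expansion}$ is set up correctly, the rest is bookkeeping; I would double-check the final formula on the $n=2$ case using $F_{Q_2}(u,v) = (u+v)(1+\alpha_2 uv)$ from the Example, where $\gamma_1^{(2)} = 1$ and the correction is $-\alpha_2\big(p_0 h_{k+2} - p_1 h_{k+1}\big) = -\alpha_2(h_{k+2} - e_1 h_{k+1})$, which should match a direct computation of $\pi_*(\tau^{k+e-1})$ on $\PP^*(E)$.
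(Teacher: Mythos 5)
Your plan is essentially the paper's own proof: the authors also invoke Vishik's push-forward formula (the sum-over-Chern-roots form of the Quillen residue formula you describe), expand the unit factor coming from the formal group law to first order in $\alpha_{2m}$ using $\alpha_{2m}^2=0$, recognize $\sum_{j\neq i}x_j^l$ as $p_l(E)-x_i^l$, and finish with the classical identity $\sum_i x_i^k/\prod_{j\neq i}(x_i-x_j)=h_{k-e+1}$. The sign and index bookkeeping you flag as the main obstacle works out exactly as you predict, with the $l=0$ term absorbing the $x_i^l$ correction via $\gamma_0^{(2m)}=-\sum_{l=1}^{2m-1}(-1)^l\gamma_l^{(2m)}$.
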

\begin{proof}
Before we begin the proof of the main statement, let us quickly observe that, in view of (\ref{eq gamma}), the closed formula for $\gamma_0^{(2m)}$ easily follow from its definition. 
If the $x_i$'s represent the Chern roots of $E$, then starting from push-forward formula of Vishik (\cite[Proposition 5.29]{Vishik}), we have following chain of equalities:
\begin{eqnarray*}
\mathcal{S}_{k}(E)
&=&\sum_{i=1}^e \frac{x_i^{k+e-1}}{ \prod_{j\neq i}\left[(x_i- x_j)\left[1+\alpha_{2m}\left(\sum_{l=1}^{2m-1} \gamma_l^{(2m)} x_i^{2m-l}(-x_j)^l\right)\right]\right]}\\
&=&\sum_{i=1}^e \frac{x_i^{k+e-1}\prod_{j\neq i}\left[1-\alpha_{2m}\left(\sum_{l=1}^{2m-1} \gamma_l^{(2m)} x_i^{2m-l}(-x_j)^l\right) \right]}{\prod_{j\neq i}(x_i- x_j)}\\
&=&\sum_{i=1}^e\frac{x_i^{k+e-1}\left[1-\alpha_{2m}\sum_{l=1}^{2m-1}\left((-1)^l\gamma_l^{(2m)}x_i^{2m-l}(p_l(E)-x_i^l)\right)\right]}{\prod_{j\neq i}(x_i- x_j)}.
\end{eqnarray*}
It is well-known that the complete symmetric polynomials has the following expression:
\[
\sum_{i=1}^e \frac{x_i^k}{\prod_{j=1,\dots, e \atop{j\not= i}} (x_i-x_j)}=h_{k-e+1}(x_1,\tred,x_e).
\]
Therefore
\begin{eqnarray*}
\mathcal{S}_{k}(E)
&=&h_k(E)-\alpha_{2m}\left(\sum_{l=1}^{2m-1}(-1)^l \gamma_l^{(2m)}p_l(E) \cdot h_{2m+k-l}(E)-\sum_{l=1}^{2m-1}(-1)^l \gamma_l^{(2m)}\cdot h_{2m+k}(E)\right).
\end{eqnarray*}
Thus the claim follows.
\end{proof}
\begin{rem}
A direct consequence of the previous proposition is that $S_k(E)$ does not change if the vector bundle $E$ is replaced by $E\oplus\mathcal{O}_X^m$, for any positive $m$. Hence, it is possible to extend the definition of $S_k$ to arbitrary indices, bypassing the restriction given by the rank. In fact we can set 
\[
\SE_k(E):=\lim_{m\rightarrow \infty} \SE_k(E\oplus\mathcal{O}_X^m).
\]
Thus $\SE_k(E)$ can be defined for all $k\in \ZZ$. 
\end{rem}
Consider the generating function of Segre classes of $E$:
\[
\SE_t(E):=\sum_{k\in\ZZ} \SE_k(E) t^k.
\] 
Our next goal is to provide a description of $\SE_t(E)$. 
\begin{prop}\label{prop R}
Let $E$ be a vector bundle of rank $e$ over $X\in\SM$. Then in $I_{2m}^*(X)[[t]]$ we have 
\[
\SE_t(E)=c_{-t}(-E)\left(1-\alpha_{2m}\sum_{i=1}^{2m} (-1)^i \gamma^{(2m)}_{2m-i}p_{2m-i}(E)t^{-i}\right).
\]  
\end{prop}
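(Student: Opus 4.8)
The plan is to package the closed formula for the individual Segre classes from Proposition \ref{prop Segre} into its generating function and recognize each of the resulting sums as a known symmetric-function series. Recall that $\SE_k(E) = h_k(E) - \alpha_{2m}\sum_{l=0}^{2m-1}(-1)^l\gamma_l^{(2m)} p_l(E) h_{2m+k-l}(E)$, where $\gamma_0^{(2m)} = (2m+1)/d_{2m}$. Summing against $t^k$ over all $k\in\ZZ$, the first term contributes $\sum_k h_k(E)t^k$. The classical identity $\sum_{k\geq 0} h_k(E) t^k = \prod_i (1-x_i t)^{-1} = 1/c_{-t}(E) = c_{-t}(-E)$ (using \eqref{eq virtual} for the last equality, since $c_{-t}(-E)=1/c_{-t}(E)$) identifies this with $c_{-t}(-E)$; one should remark that $h_k(E)=0$ for $k<0$ so the sum over $\ZZ$ agrees with the sum over $k\geq 0$.

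Next I would handle the $\alpha_{2m}$-term. For each fixed $l$, interchanging the order of summation gives $\sum_k p_l(E) h_{2m+k-l}(E) t^k = p_l(E)\, t^{l-2m}\sum_{k} h_{2m+k-l}(E) t^{2m+k-l} = p_l(E)\, t^{l-2m}\, c_{-t}(-E)$, again because the complete-homogeneous generating series is $c_{-t}(-E)$ and shifting the summation index is harmless. Therefore
\[
\SE_t(E) = c_{-t}(-E)\left(1 - \alpha_{2m}\sum_{l=0}^{2m-1}(-1)^l\gamma_l^{(2m)} p_l(E)\, t^{l-2m}\right).
\]
Finally I would match this against the claimed formula by the change of index $i = 2m-l$, so that $l$ ranges over $0,\dots,2m-1$ exactly when $i$ ranges over $1,\dots,2m$, giving $(-1)^l = (-1)^{2m-i} = (-1)^i$, $t^{l-2m} = t^{-i}$, and $\gamma_l^{(2m)} = \gamma_{2m-i}^{(2m)}$, $p_l(E) = p_{2m-i}(E)$; this turns the sum into $\sum_{i=1}^{2m}(-1)^i\gamma_{2m-i}^{(2m)} p_{2m-i}(E) t^{-i}$, which is precisely the asserted expression (note $p_0(E)=1$ by convention, so the $i=2m$ term is consistent with $\gamma_0^{(2m)}$).

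None of the steps is genuinely hard; the only point requiring a little care is the bookkeeping with the doubly-infinite sum over $k\in\ZZ$ and the convergence of the resulting series in $I_{2m}^*(X)[[t]]$ — one must check that, after the index shift, each coefficient of a fixed power of $t$ is a finite sum of classes, which holds because $h_j(E)=0$ for $j<0$ and $X$ has finite dimension. I would therefore expect the main (mild) obstacle to be justifying the termwise rearrangement of $\SE_t(E) = \sum_k \SE_k(E) t^k$ cleanly, rather than any substantive computation.
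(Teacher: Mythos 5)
Your argument is correct and is in substance the same as the paper's: both rest on Proposition \ref{prop Segre} together with the identity $\sum_{q}(-1)^q c_q(E)h_{j-q}(E)=\delta_{j,0}$, i.e.\ $\sum_k h_k(E)t^k = 1/c_{-t}(E)=c_{-t}(-E)$. The only cosmetic difference is that the paper verifies the claim coefficientwise for the product $R_t(E)=\SE_t(E)c_{-t}(E)$ (using the geometric relation (\ref{eq Chern}) to kill the positive-degree coefficients and the vanishing of Segre classes below degree $-2m$), whereas you factor the complete-symmetric-function series out of $\SE_t(E)$ directly and reindex.
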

\begin{proof}
To prove the formula, we compute $R_t(E):=\SE_t(E)c_{-t}(E)$. By definition we have 
\begin{equation}\label{eqR_i}
R_i(E)=\sum_{q=0}^e (-1)^q c_q(E)\SE_{i-q}(E).
\end{equation}
We begin by proving that $R_i=0$ for $i>0$. This is an direct  consequence of (\ref{eq Chern}). In fact, by multiplying both sides of (\ref{eq Chern}) by $\xi^{i-1}$ and then  pushing forward to $I_{2m}^*(X)$, one obtains $R_i(E)=0$. On the other hand, if $i<-2m$, then $R_i$ is again zero since by Proposition \ref{prop Segre} all the Segre classes involved vanish. Let us now assume that $-2m \leq i\leq 0$. By applying Proposition \ref{prop Segre} to (\ref{eqR_i}), one obtains
\begin{eqnarray*}
R_i(E)
&=&\sum_{q=0}^e (-1)^q c_q(E)h_{i-q}(E)-\alpha_{2m}\sum_{l=0}^{2m-1} (-1)^l \gamma_l^{(2m)}\cdot  p_l(E) \cdot \sum_{q=0}^e (-1)^q c_q(E)h_{2m+i-q-l}(E).
\end{eqnarray*}
The claim follows by observing that the given alternating sums of the product of complete and elementary symmetric functions are $1$ if the total degree is $0$ and vanishes otherwise.
\end{proof}
\begin{defn}
We define the relative Segre classes $\SE_i(E-F)$ by evaluating on $[E]-[F]$ the symmetric functions in (\ref{eq Segre}). Or equivalently, define by Proposition \ref{prop R}:
\begin{equation}\label{dfRelSeg}
\SE_t(E-F)=c_{-t}(-E+F)\left(1-\alpha_{2m}\sum_{i=1}^{2m} (-1)^i \gamma^{(2m)}_{2m-i}p_{2m-i}(E-F)t^{-i}\right).
\end{equation}
\end{defn}
Next we want to express $S_t(E-F)$ in terms of symmetric functions and the Segre and Chern polynomials of the two summands.
\begin{prop}\label{relSegre}
Let $E$ and $F$ be two vector bundles. Then the Segre polynomial of the associated virtual class is given by
\[
\SE_t(E-F)=c_t(F^\vee)\SE_t(E)\left[ 1+ \alpha_{2m} \sum_{i=1}^{2m-1}\gamma_{2m-i}^{(2m)} p_{2m-i}(F^\vee)t^{-i}\right].
\]
\end{prop}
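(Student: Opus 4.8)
The plan is to prove Proposition \ref{relSegre} by a direct computation starting from the defining formula \eqref{dfRelSeg} for $\SE_t(E-F)$, manipulating the two factors into the desired shape. The first observation is purely formal: for a vector bundle $F$ of rank $f$ with Chern roots $y_1,\dots,y_f$, the Chern roots of $F^\vee$ are $-y_1,\dots,-y_f$, so $c_t(F^\vee) = \prod_j(1 - y_j t)$ while $c_{-t}(F) = \prod_j(1 + y_j t)$; hence $c_{-t}(F) = c_t(F^\vee)$ after the substitution $t \mapsto -t$ inside the roots, and more usefully $c_{-t}(-E+F) = c_{-t}(-E)\,c_{-t}(F) = c_{-t}(-E)\, c_t(F^\vee)$ once one is careful about which variable is negated. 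I would first sort out this bookkeeping cleanly, writing $c_{-t}(-E+F) = c_{-t}(-E)\, c_{-t}(F)$ from \eqref{eq virtual} and then identifying $c_{-t}(-E) = \SE_t(E)\bigl(1 - \alpha_{2m}\sum_{i=1}^{2m}(-1)^i\gamma^{(2m)}_{2m-i}p_{2m-i}(E)t^{-i}\bigr)^{-1}$, which is just \eqref{dfRelSeg} for $F = 0$ solved for $c_{-t}(-E)$.

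The substantive step is then the additivity of the power sums: $p_{2m-i}(E-F) = p_{2m-i}(E) - p_{2m-i}(F)$, and the relation between $p_k(F)$ and $p_k(F^\vee)$, namely $p_k(F^\vee) = (-1)^k p_k(F)$. So the correction factor splits: $1 - \alpha_{2m}\sum_i (-1)^i\gamma^{(2m)}_{2m-i}\bigl(p_{2m-i}(E) - p_{2m-i}(F)\bigr)t^{-i}$. Since $\alpha_{2m}^2 = 0$, the product $\SE_t(E)$-times-this-factor and the analogous $E$-only factor interact only linearly in $\alpha_{2m}$, which makes the algebra tractable: any product of two $\alpha_{2m}$-terms drops out. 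Concretely I would write $\SE_t(E-F) = c_{-t}(-E)\,c_{-t}(F)\cdot\bigl(1 - \alpha_{2m}(\cdots_E) + \alpha_{2m}(\cdots_F)\bigr)$, substitute $c_{-t}(-E) = \SE_t(E)(1+\alpha_{2m}(\cdots_E))$ (the inverse of $1 - \alpha_{2m}(\cdots_E)$ being $1 + \alpha_{2m}(\cdots_E)$ modulo $\alpha_{2m}^2$), and watch the two $(\cdots_E)$ terms cancel, leaving $\SE_t(E)\,c_{-t}(F)\bigl(1 + \alpha_{2m}\sum_i(-1)^i\gamma^{(2m)}_{2m-i}p_{2m-i}(F)t^{-i}\bigr)$. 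Finally, rewriting $c_{-t}(F) = c_t(F^\vee)$ and $(-1)^i p_{2m-i}(F) = (-1)^{2m}p_{2m-i}(F^\vee) = p_{2m-i}(F^\vee)$ (using $2m$ even) — and noting the $i = 2m$ term carries $p_0 t^{-2m}$ which can be absorbed or reindexed so the sum runs $i = 1,\dots,2m-1$ as in the statement, the $\gamma^{(2m)}_0$ contribution being handled via its closed form in Proposition \ref{prop Segre} — gives exactly the claimed identity.

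The main obstacle I anticipate is not conceptual but notational: keeping straight the three different sign conventions simultaneously in play — the negation of Chern roots under dualization, the $(-1)^k$ in $p_k(F^\vee) = (-1)^k p_k(F)$, and the $(-1)^i$ already present in the defining sum \eqref{dfRelSeg} — and making sure the range of summation collapses correctly from $\{1,\dots,2m\}$ to $\{1,\dots,2m-1\}$. The $i = 2m$ boundary term deserves particular care: it involves $p_0 = 1$ and $\gamma^{(2m)}_0$, and one must check using $\gamma^{(2m)}_0 = \frac{2m+1}{d_{2m}}$ that its contribution either vanishes or is correctly reabsorbed so that the final sum matches the statement verbatim. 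I would also double-check the edge behavior in $t$ (the factor $t^{-2m}$) against the ring $I_{2m}^*(X)[[t]]$ versus Laurent series, since \eqref{dfRelSeg} genuinely produces negative powers of $t$; but since the paper already works with $\SE_t$ as a two-sided series this is a formality. Once the sign bookkeeping is pinned down, the proof is a half-page computation.
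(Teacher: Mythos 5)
Your proposal is correct and follows essentially the same route as the paper's proof: start from \eqref{dfRelSeg}, split $c_{-t}(-E+F)=c_t(F^\vee)c_{-t}(-E)$ and $p_k(E-F)=p_k(E)-p_k(F)$ via Lemma \ref{lemma Power}, recognize $\SE_t(E)$ through Proposition \ref{prop R}, and use $\alpha_{2m}^2=0$ to identify $\alpha_{2m}c_{-t}(-E)$ with $\alpha_{2m}\SE_t(E)$. The boundary term $i=2m$ is handled exactly as you suspect (it carries $p_0=1$ and is absorbed entirely into the $\SE_t(E)$ factor, so no appeal to the closed form of $\gamma_0^{(2m)}$ is needed), and the sign $(-1)^ip_{2m-i}(F)=p_{2m-i}(F^\vee)$ closes the argument.
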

\begin{proof}
In (\ref{dfRelSeg}), Lemma \ref{lemma Power} allows one to rewrite $p_k(E-F)$, and by applying Proposition \ref{prop R} we obtain
\begin{eqnarray*}
\SE_t(E-F)
&=&c_t(F^\vee)\SE_t(E)+c_t(F^\vee)c_{-t}(-E)\alpha_{2m}\sum_{i=1}^{2m-1}\gamma_{2m-i}^{(2m)}p_{2m-i}(F) t^{-i}.
\end{eqnarray*}
To finish the proof it suffices to observe that one has $\alpha_{2m}S_t(E)=\alpha_{2m}c_t(-E^\vee)$ since $\alpha_{2m}^2=0$.
\end{proof}

We conclude this section by giving a geometric interpretation to the Segre classes of virtual bundles. To achieve this, we first need to prove a lemma describing the top Chern class of the tensor product of a vector bundle and a line bundle.

\begin{lem} \label{lemma tensor}
Let $E$ be a vector bundle of rank $e$ and $L$ line bundle over $X\in \SM$. Denote by $x_i$ the Chern roots of $E$ and by $\tau$ that of $L$. Then in $I_{2m}^*(X)$ we have
\[
c_e(L\otimes E)=\sum_{l=0}^e c_l(E)\tau^{e-l}\cdot \left[1+\alpha_{2m} \sum_{j=1}^{2m-1}
\gamma_j^{(2m)} p_j(E)  \tau^{2m-j} \right].
\]
\end{lem}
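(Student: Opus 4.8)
The strategy is to compute $c_e(L\otimes E)$ directly from its definition via Chern roots and the formal group law $\boxplus$ of $I_{2m}^*$ described in Lemma \ref{lemFGL2m}. The Chern roots of $L\otimes E$ are $\tau\boxplus x_1,\dots,\tau\boxplus x_e$, so by the splitting principle
\[
c_e(L\otimes E)=\prod_{i=1}^e(\tau\boxplus x_i)=\prod_{i=1}^e(\tau+x_i)\left[1+\alpha_{2m}\sum_{j=1}^{2m-1}\gamma_j^{(2m)}\tau^{2m-j}x_i^j\right].
\]
Since $\alpha_{2m}^2=0$, when expanding the product over $i$ only the terms linear in $\alpha_{2m}$ survive beyond the leading term, and in each such term exactly one factor contributes its $\alpha_{2m}$-part while all the others contribute $\tau+x_i$. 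Thus
\[
c_e(L\otimes E)=\prod_{i=1}^e(\tau+x_i)\left[1+\alpha_{2m}\sum_{j=1}^{2m-1}\gamma_j^{(2m)}\tau^{2m-j}\sum_{i=1}^e\frac{x_i^j\prod_{k}(\tau+x_k)}{(\tau+x_i)}\right].
\]

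\textbf{Key steps.} First I would establish the leading term: $\prod_{i=1}^e(\tau+x_i)=\sum_{l=0}^e e_l(E)\tau^{e-l}=\sum_{l=0}^e c_l(E)\tau^{e-l}$, which is the classical expansion of the Chern polynomial and matches the $\alpha_{2m}=0$ part of the claimed formula. The substantive point is then to identify the $\alpha_{2m}$-correction. Working modulo $\alpha_{2m}$, we may freely replace each surviving factor $\tau+x_k$ ($k\neq i$) inside the correction term, and the claim amounts to showing
\[
\sum_{i=1}^e x_i^j\prod_{k\neq i}(\tau+x_k)\;\equiv\;p_j(E)\prod_{k=1}^e(\tau+x_k)\pmod{\text{(terms killed by }\alpha_{2m}\text{)}},
\]
i.e. the ratio of these two symmetric expressions is $p_j(E)$ up to terms that vanish after multiplication by $\alpha_{2m}$. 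This is not literally a polynomial identity, so the cleanest route is: note that $\prod_{k\neq i}(\tau+x_k)=\prod_k(\tau+x_k)/(\tau+x_i)$ and expand $1/(\tau+x_i)$ as a power series, or — more robustly — recall that in $I_{2m}^*$ one has $\alpha_{2m}c_t(E)=\alpha_{2m}\,(\text{something computable})$ type reductions (cf. the end of the proof of Proposition \ref{relSegre}, where $\alpha_{2m}\SE_t(E)=\alpha_{2m}c_t(-E^\vee)$). I would mirror that: since the correction is already multiplied by $\alpha_{2m}$, I can compute $\sum_i x_i^j\prod_{k\neq i}(\tau+x_k)$ modulo $\alpha_{2m}$ using ordinary commutative algebra in $\ZZ[x_1,\dots,x_e][\tau]$, where the identity $\sum_{i=1}^e x_i^j\prod_{k\neq i}(\tau+x_k)=p_j(x)\prod_k(\tau+x_k)+(\text{lower-order in the }x_i)$ can be checked by comparing both sides as symmetric polynomials in $x_1,\dots,x_e$ with coefficients in $\ZZ[\tau]$; alternatively, the substitution $\tau\mapsto -\tau$ turns $\prod_k(\tau+x_k)$ into $\pm c_{-\tau}$-type generating functions and the sum into a Newton-type power-sum generating function, at which point the identity is a standard manipulation of $\sum_i x_i^j/(x_i-\tau)=\sum_{r\geq 0}p_{j+r}(x)\tau^{r}/\tau^{?}$-style formulae.

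\textbf{Main obstacle.} The delicate part is the bookkeeping that makes ``modulo $\alpha_{2m}$'' rigorous: the correction term is a genuine element of $I_{2m}^*(X)$, and $\prod_{k\neq i}(\tau+x_k)$ is a well-defined polynomial, but the rewriting $x_i^j\prod_{k\neq i}(\tau+x_k)\leftrightarrow x_i^j\,p_0\text{-shifted}$ must be justified without dividing by $\tau+x_i$ (which is not invertible). I expect to handle this by the symmetric-function identity
\[
\sum_{i=1}^e x_i^j\prod_{k\neq i}(\tau+x_k)=\sum_{a=0}^{e-1}\Big(\sum_{i=1}^e x_i^{j}\,e_{e-1-a}(x_1,\dots,\widehat{x_i},\dots,x_e)\Big)\tau^{a},
\]
expanding each inner sum via the relation $e_r(x\setminus x_i)=\sum_{b\ge 0}(-1)^b x_i^b e_{r-b}(x)$ (so the whole thing becomes $\sum_a\tau^a$ times combinations of $p_{j+b}(E)e_{\bullet}(E)$) and then observing that after multiplying by $\alpha_{2m}\sum_j\gamma_j^{(2m)}\tau^{2m-j}$ and re-summing over $j$, all the ``extra'' $p_{j+b}$ with $b\geq 1$ reorganize — using again $\alpha_{2m}^2=0$ and the Whitney-type collapse of alternating $e$–$h$ sums used in Proposition \ref{prop R} — into exactly $\alpha_{2m}\sum_l c_l(E)\tau^{e-l}\sum_j\gamma_j^{(2m)}p_j(E)\tau^{2m-j}$. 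So the proof is essentially: expand via Chern roots, kill $\alpha_{2m}^2$, and perform one symmetric-function simplification; the only real work is this last simplification, and I would present it compactly by reducing to the power-sum generating-function identity rather than by brute expansion.
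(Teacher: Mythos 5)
Your starting point coincides with the paper's: write $c_e(L\otimes E)=\prod_{i=1}^e(\tau\boxplus x_i)$ and use Lemma \ref{lemFGL2m} to factor each root as $(\tau+x_i)\bigl[1+\alpha_{2m}\sum_{j}\gamma_j^{(2m)}x_i^j\tau^{2m-j}\bigr]$. But your second display is incorrect, and the entire ``main obstacle'' you then wrestle with is an artifact of that slip. Setting $A_i:=\sum_{j=1}^{2m-1}\gamma_j^{(2m)}x_i^j\tau^{2m-j}$, the product splits as $\bigl(\prod_i(\tau+x_i)\bigr)\cdot\prod_i(1+\alpha_{2m}A_i)$, and since $\alpha_{2m}^2=0$ the second factor collapses to $1+\alpha_{2m}\sum_i A_i=1+\alpha_{2m}\sum_j\gamma_j^{(2m)}p_j(E)\tau^{2m-j}$; multiplying by $\prod_i(\tau+x_i)=\sum_{l=0}^e c_l(E)\tau^{e-l}$ finishes the proof in one line, which is exactly what the paper does. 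In your expansion you let the $i$-th factor contribute only $\alpha_{2m}A_i$ rather than its actual $\alpha_{2m}$-part $(\tau+x_i)\,\alpha_{2m}A_i$, which is how the spurious $\prod_k(\tau+x_k)/(\tau+x_i)$ appears inside the bracket while $\prod_k(\tau+x_k)$ also sits outside: your correction term carries an extra factor of $\prod_{k\neq i}(\tau+x_k)$.

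As a consequence, the congruence you then set out to prove,
\[
\sum_{i=1}^e x_i^j\prod_{k\neq i}(\tau+x_k)\;\equiv\;p_j(E)\prod_{k=1}^e(\tau+x_k),
\]
is genuinely false --- the two sides do not even have the same degree --- and no amount of working ``modulo terms killed by $\alpha_{2m}$'' can repair it: both sides are ordinary polynomials in the Chern roots and $\tau$ containing no $\alpha_{2m}$, so multiplying by $\alpha_{2m}$ annihilates nothing. The planned appeals to Newton-type generating functions and to a ``Whitney-type collapse'' therefore cannot succeed. Once the expansion is performed correctly there is no symmetric-function identity left to verify beyond $\sum_i x_i^j=p_j(E)$, which is the definition of the power sum.
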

\begin{proof} 
The proof essentially in a Chern root computation, together with the use of Lemma \ref{lemFGL2m}: 
\begin{align*}
c_e(L\otimes E)&= \prod_{i=1}^e (\tau\boxplus x_i)=\prod_{i=1}^e (\tau+x_i)\cdot  \prod_{i=1}^e\left[1+\alpha_{2m}\sum_{j=1}^{2m-1}\gamma_j^{(2m)}x_i^j \tau^{2m-j} \right].
\end{align*}
Indeed the right hand side is equal to the desired expression.
\end{proof}
\begin{prop}\label{push of tensor}
Let $E$ and $F$ two vector bundles over a smooth scheme $X$, respectively of rank $e$ and $f$. Consider the projective bundle of hyperplanes $\PP^*(E)\stackrel{\pi}\rightarrow X$, denote by $\mathcal{Q}$ the associated universal quotient line bundle $\calO(1)$ and by $\xi$ its first Chern class. Then for any nonnegative integer $s$ we have
\[
\pi_*\Big(\xi^s c_f(\calQ\otimes  F^{\vee})\Big)=\SE_{s+f-e+1}(E- F).
\]
\end{prop}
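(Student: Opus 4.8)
The plan is to reduce the statement to the two main computational inputs already established: Lemma~\ref{lemma tensor}, which expands $c_f(\calQ\otimes F^\vee)$ as a polynomial in $\xi=c_1(\calQ)$ with coefficients in $I_{2m}^*(X)$, and Proposition~\ref{relSegre} (via Proposition~\ref{prop R}), which identifies $\pi_*(\xi^{k+e-1})=\SE_k(E)$ together with the virtual correction terms. First I would apply Lemma~\ref{lemma tensor} with $E$ replaced by $F^\vee$ and $L=\calQ$, so that
\[
\xi^s c_f(\calQ\otimes F^\vee)=\xi^s\sum_{l=0}^f c_l(F^\vee)\xi^{f-l}\left[1+\alpha_{2m}\sum_{j=1}^{2m-1}\gamma_j^{(2m)}p_j(F^\vee)\xi^{2m-j}\right].
\]
Expanding, this is a finite $I_{2m}^*(X)$-linear combination of powers $\xi^{N}$ with $N=s+f-l$ and $N=s+f-l+2m-j$.

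Next I would push forward termwise using $\pi_*(\xi^{N})=\SE_{N-e+1}(E)$, which is immediate from the definition of Segre classes (and the projection formula, since the Chern classes of $F^\vee$ are pulled back from $X$). Collecting the resulting terms gives
\[
\pi_*\big(\xi^s c_f(\calQ\otimes F^\vee)\big)=\sum_{l=0}^f c_l(F^\vee)\,\SE_{s+f-l-e+1}(E)+\alpha_{2m}\sum_{l=0}^f\sum_{j=1}^{2m-1}\gamma_j^{(2m)}\,c_l(F^\vee)\,p_j(F^\vee)\,\SE_{s+f-l-e+1+2m-j}(E).
\]
Writing $k:=s+f-e+1$, the first sum is exactly the degree-$k$ part of $c_t(F^\vee)\SE_t(E)$, and after reindexing $i=2m-j$ the second sum is exactly the degree-$k$ part of $\alpha_{2m}c_t(F^\vee)\SE_t(E)\sum_{i=1}^{2m-1}\gamma_{2m-i}^{(2m)}p_{2m-i}(F^\vee)t^{-i}$. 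Hence the right-hand side is the degree-$k$ coefficient of
\[
c_t(F^\vee)\SE_t(E)\left[1+\alpha_{2m}\sum_{i=1}^{2m-1}\gamma_{2m-i}^{(2m)}p_{2m-i}(F^\vee)t^{-i}\right],
\]
which by Proposition~\ref{relSegre} equals $\SE_t(E-F)$. Extracting the coefficient of $t^k$ yields $\SE_{s+f-e+1}(E-F)$, as desired.

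I do not expect a serious obstacle; the argument is a bookkeeping exercise in matching coefficients. The one place requiring a little care is the index range: because $s\geq 0$, all exponents $N=s+f-l$ and $s+f-l+2m-j$ appearing are $\geq 0$ (indeed $\geq s$), so the pushforward formula $\pi_*(\xi^N)=\SE_{N-e+1}(E)$ applies without needing the stabilized/extended definition of Segre classes, and no spurious lower-order terms appear. The other subtlety is that the factor $\alpha_{2m}$ squares to zero, so in manipulating $\alpha_{2m}c_t(F^\vee)\SE_t(E)$ one may freely replace $\SE_t(E)$ by $c_t(-E^\vee)$ or perform any simplification valid modulo $\alpha_{2m}$; but in fact the present computation never needs this, since we only use the raw identity $\pi_*(\xi^N)=\SE_{N-e+1}(E)$ and then quote Proposition~\ref{relSegre} verbatim.
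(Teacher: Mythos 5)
Your argument is correct and is essentially identical to the paper's own proof: both expand $c_f(\calQ\otimes F^\vee)$ via Lemma~\ref{lemma tensor}, push forward termwise using $\pi_*(\xi^N)=\SE_{N-e+1}(E)$, and identify the resulting sum with $\SE_{s+f-e+1}(E-F)$ through Proposition~\ref{relSegre}. Your added remarks on the index range and on $\alpha_{2m}^2=0$ are accurate but not needed, exactly as you suspected.
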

\begin{proof}
 In view of Lemma \ref{lemma tensor} and of the definition of (relative) Segre classes we have the following chain of equalities:
\begin{eqnarray*}
&&\pi_*\Big(\xi^s\cdot c_f(\calQ\otimes  F^{\vee})\Big)\\
&=&\pi_*\left( \sum_{l=0}^f c_l(F^{\vee})\left(\xi^{s+f-l}+ \alpha_{2m} \sum_{i=1}^{2m-1}\gamma_{2m-i}^{(2m)} p_{2m-i}(F^{\vee})  \xi^{s+i+f-l} \right)\right)\\
&=&\sum_{l=0}^f c_l(F^{\vee})\left(\SE_{(s+f-e+1)-l}(E)+\alpha_{2m} \sum_{i=1}^{2m-1}\gamma_{2m-i}^{(2m)} p_{2m-i}(F^{\vee}) \SE_{i+(s+f-e+1)-l}(E) \right)\\
&=&S_{s+f-e+1}(E-F).
\end{eqnarray*}
\end{proof} 
\section{Determinantal formula for Grassmannian bundles}
\subsection{Degeneracy loci and Kempf-Laksov resolutions}\label{secKL}
Let $E$ be a vector bundle of rank $n$ over a smooth quasi-projective variety $X$. Let $\Gr_d(E) \to X$ be the Grassmannian bundle of rank $d$ subbundles over $X$, \textit{i.e.}
\[
\Gr_d(E):=\{(x,S_x) \ |\ x \in X, S_x \mbox{ is a $d$-dimensional subspace of $E_x$} \}.
\]
Let $S$ be the tautological subbundle of the pullback of $E$ over $\Gr_d(E)$. Fix a complete flag $0=F^n\subset \cdots \subset F^1\subset F^0= E$ where the superscript indicates the corank, \textit{i.e.} $\rk\ F^k = n-k$. We denote also by $E$ and $F^i$ the pullback of  $E$ and $F^i$ along the projection $\Gr_d(E) \to X$ respectively. 

Let $\calP_d$ be the set of all partitions $(\lambda_1,\dots,\lambda_d)$ with at most $d$ parts. The length of $\lambda$ is the number of nonzero parts. Let $\calP_d(n)$ be the set of all partitions in $\calP_d$ such that $\lambda_i\leq n-d$ for all $i=1,\dots,d$. For each partition $\lambda \in \calP_d(n)$ of length $r$, consider the partial flag of $E$
\[
F_{\lambda}^{\bullet}: F^{\lambda_1-1+d} \subset F^{\lambda_2-2+d} \subset \cdots \subset F^{\lambda_r-r+d} \subset E.
\]
Define the type $A$ degeneracy locus $X_{\lambda}$ in $\Gr_d(E)$ by
\[
X_{\lambda}:=\{(x,S_x) \in \Gr_d(E) \ |\ \dim (F^{\lambda_i-i+d}_x \cap S_x) \geq i, i=1,\dots, r\}.
\]
Associated to the partial flag $F_{\lambda}^{\bullet}$ is a generalized flag bundle 
\[
\pi: \Fl(F_{\lambda}^{\bullet}) \to \Gr_d(E)
\]
such that the fiber at $p \in \Gr_d(E)$ consists of flags of subspaces $(D_1)_p \subset \cdots \subset (D_r)_p$ of $E_p$ with $\dim (D_i)_p=i$ and $(D_i)_p\subset F^{\lambda_i-i+d}_p$. Let $D_1\subset \cdots \subset D_r$ be the corresponding flag of tautological subbundles over $\Fl(F_{\lambda}^{\bullet})$. One can obtain the flag bundle $\Fl(F_{\lambda}^{\bullet})$  as a tower of projective bundles
\begin{eqnarray}\label{tower}
&&\pi: \Fl(F_{\lambda}^{\bullet})=\PP(F^{\lambda_r-r+d}/D_{r-1}) \stackrel{\pi_r}{\longrightarrow} 
\PP(F^{\lambda_{r-1}-(r-1)+d}/D_{r-2}) \stackrel{\pi_{r-1}}{\longrightarrow} \cdots \ \ \ \ \ \ \ \ \ \nonumber\\\label{P tower}
&&\ \ \ \ \ \ \ \ \ \ \ \ \ \ \ \ \ \ \ \ \ \ \cdots\stackrel{\pi_3}{\longrightarrow} \PP(F^{\lambda_2-2+d}/D_1) \stackrel{\pi_2}{\longrightarrow} \PP(F^{\lambda_1-1+d})  \stackrel{\pi_1}{\longrightarrow} \Gr_d(E).
\end{eqnarray}
We regard $D_i/D_{i-1}$ as the tautological line bundle  $\calO(-1)$ of $\PP(F^{\lambda_i-i+d}/D_{i-1})$. We denote the first Chern class $c_1((D_i/D_{i-1})^{\vee})$ by $\tau_i$.

\begin{defn}
Let $\widetilde{X}^{KL}_\lambda \subset \Fl(F_{\lambda}^{\bullet})$ be the locus where $D_r \subset S$.  It is well-known that $\widetilde{X}^{KL}_\lambda$ is smooth and birational to $X_{\lambda}$ along $\pi$ (see \cite{KempfLaksov}). We refer to it as the Kempf-Laksov resolution of the degeneracy loci $X_{\lambda}$. Define the Kempf-Laksov class $\kappa_{\lambda}$ in $I_{2m}^*(\Gr_d(E))$ by
\[
\kappa_{\lambda} := \pi_*[\widetilde{X}^{KL}_\lambda],
\]
where $[\widetilde{X}^{KL}_\lambda]$ is the fundamental class of $\widetilde{X}^{KL}_\lambda$ in $\Fl(F_{\lambda}^{\bullet})$.
\end{defn}
\begin{prop}\label{chern prod}
In $I_{2m}^*(\Fl(F_{\lambda}^{\bullet}))$, we have
\begin{equation}\label{Y_r prod}
\ [\widetilde{X}^{KL}_\lambda] =\prod_{i=1}^r c_{n-d}((D_i/D_{i-1})^{\vee}\otimes E/S).
\end{equation}
\end{prop}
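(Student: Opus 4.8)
The plan is to realize $\widetilde{X}^{KL}_\lambda$ as the zero scheme of a single section of the rank-$r(n-d)$ bundle $\calV:=D_r^\vee\otimes(E/S)$ over $\Fl(F_{\lambda}^{\bullet})$, to identify its fundamental class with the top Chern (Euler) class of $\calV$, and then to split that Euler class into the asserted product by the Whitney product formula. First I would write down the section: the tautological inclusion $D_r\hookrightarrow E$ composed with the projection $E\twoheadrightarrow E/S$ is a bundle map $D_r\to E/S$, i.e. a global section $\sigma$ of $\calV$, and by construction its zero scheme $Z(\sigma)$ is precisely the locus where $D_r\subset S$, so $Z(\sigma)=\widetilde{X}^{KL}_\lambda$ as closed subschemes.

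Next I would bring in the geometric input of Kempf-Laksov recalled in Section \ref{secKL} (see \cite{KempfLaksov}): $\widetilde{X}^{KL}_\lambda$ is smooth and birational to $X_{\lambda}$ under $\pi$. Since the fiber of $\pi_i$ is $\PP(F^{\lambda_i-i+d}/D_{i-1})\cong\PP^{\,n-d-\lambda_i}$ and $\codim_{\Gr_d(E)}X_{\lambda}=|\lambda|$, a dimension count gives $\codim_{\Fl(F_{\lambda}^{\bullet})}\widetilde{X}^{KL}_\lambda=r(n-d)=\rk\calV$. Hence $\sigma$ vanishes in the expected codimension, so it is a regular section, $Z(\sigma)$ is a local complete intersection, and in particular it is reduced and equals $\widetilde{X}^{KL}_\lambda$. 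By the transversality and extended-homotopy properties of oriented cohomology theories, valid for $\Omega^*$ and therefore for $I_{2m}^*$, the square comparing $\sigma$ with the zero section $z$ of $\calV$ is Cartesian, so for the inclusion $\iota:\widetilde{X}^{KL}_\lambda\hookrightarrow\Fl(F_{\lambda}^{\bullet})$ one gets $\iota_*(1)=\sigma^*z_*(1)=z^*z_*(1)=c_{\rk\calV}(\calV)$ (the last two equalities using that $\sigma^*=z^*$ since both invert $p^*$ for $p:\calV\to\Fl(F_{\lambda}^{\bullet})$, and that $z^*z_*(1)$ is by definition the Euler class). Thus $[\widetilde{X}^{KL}_\lambda]=c_{r(n-d)}(\calV)$ in $I_{2m}^*(\Fl(F_{\lambda}^{\bullet}))$.

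Finally I would split the Euler class. The tautological filtration $0=D_0\subset D_1\subset\cdots\subset D_r$, dualized and tensored with $E/S$, exhibits $\calV=D_r^\vee\otimes(E/S)$ as an iterated extension whose successive quotients are the rank-$(n-d)$ bundles $(D_i/D_{i-1})^\vee\otimes(E/S)$, $i=1,\dots,r$. Applying the Whitney product formula repeatedly gives $c_t(\calV)=\prod_{i=1}^r c_t\big((D_i/D_{i-1})^\vee\otimes(E/S)\big)$, and comparing the coefficients of $t^{r(n-d)}$ yields $c_{r(n-d)}(\calV)=\prod_{i=1}^r c_{n-d}\big((D_i/D_{i-1})^\vee\otimes(E/S)\big)$, which is \eqref{Y_r prod}.

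The step I expect to demand the most care is the middle one: the identification of $[\widetilde{X}^{KL}_\lambda]$ with $c_{r(n-d)}(\calV)$ in this generality, which rests on the Kempf-Laksov smoothness and birationality statements (these force $\sigma$ to be a regular section) together with the transversality and homotopy axioms inherited by $I_{2m}^*$ from $\Omega^*$. A more hands-on alternative, closer to the construction of the resolution itself, is to proceed stage by stage along the tower of projective bundles displayed above: over the sublocus of $\PP(F^{\lambda_i-i+d}/D_{i-1})$ on which $D_{i-1}\subset S$ already holds, the induced map $D_i/D_{i-1}\to E/S$ is a section of $(D_i/D_{i-1})^\vee\otimes(E/S)$ whose zero locus is the locus $D_i\subset S$; iterating over $i=1,\dots,r$ presents $\widetilde{X}^{KL}_\lambda$ as a successive zero locus and multiplies the resulting top Chern classes into the same product, at the price of checking that each intermediate locus is again smooth of the expected codimension.
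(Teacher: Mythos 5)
Your proof is correct and follows essentially the same route as the paper's: the paper's argument is precisely the regular-section/top-Chern-class statement (Lemma \ref{gaussbonnet}) combined with the stage-by-stage zero-locus construction of \cite[Section 4.3]{HIMN}, which is the alternative you sketch in your last paragraph. Packaging the $r$ stages into a single section of $D_r^{\vee}\otimes(E/S)$ and then splitting its Euler class by the Whitney formula is only a cosmetic streamlining of the same idea.
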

This proposition follows from the following lemma and the same argument used in \cite[Section 4.3]{HIMN}.
\begin{lem}[Lemma 6.6.7 \cite{LevineMorel}, Example 14.1.1 \cite{FultonIntersection}]\label{gaussbonnet}
Let $V$ be a vector bundle of rank $e$ over $X$ and $s$ a section of $V$. Let $Z$ be the zero scheme of $s$. If $X$ is Cohen-Macaulay and the codimension of $Z$ is $e$, then $s$ is regular and 
\[
c_e(V)=[Z] \in I_{2m}^e(X).
\]
\end{lem}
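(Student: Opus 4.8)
The plan is to decouple the two assertions hidden in the statement — regularity of $s$, and the equality $c_e(V)=[Z]$ — and to handle them by entirely different means. Regularity is a scheme-theoretic property, independent of the cohomology theory, so I would establish it by commutative algebra. The Chern-class identity, on the other hand, I would first prove in algebraic cobordism $\Omega^*$ and then transport to $I_{2m}^*$ along the canonical morphism $\vartheta\colon\Omega^*\to I_{2m}^*$ of oriented cohomology theories: since $\vartheta$ commutes with pullbacks, with projective pushforwards, and with Chern classes, it carries $[Z]=i_*(1_Z)$ and $c_e(V)$ in $\Omega^*(X)$ to the corresponding elements of $I_{2m}^*(X)$, so the equality descends. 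Thus it suffices to argue in an arbitrary oriented cohomology theory $A^*$, exactly as in \cite{LevineMorel} and \cite{FultonIntersection}.

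To see that $s$ is regular, I would work on an affine open $U=\Spec R$ trivialising $V$, where $s$ becomes an $e$-tuple $(f_1,\dots,f_e)\in R^e$ and $Z\cap U=\Spec\,R/(f_1,\dots,f_e)$. Cohen-Macaulayness of $X$ makes $R$ a Cohen-Macaulay ring, and the hypothesis $\codim Z=e$ says that the ideal $(f_1,\dots,f_e)$ has height equal to its number of generators. The standard criterion then applies: in a Cohen-Macaulay ring the grade of an ideal equals its height, so an ideal of height $e$ generated by $e$ elements is generated by a regular sequence. Hence $s$ is a regular section, $i\colon Z\hookrightarrow X$ is a regular closed immersion of codimension $e$ with $N_{Z/X}\cong i^*V$, and the fundamental class $[Z]=i_*(1_Z)\in A^e(X)$ is well defined.

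For the identity, let $p\colon\bfV\to X$ be the total space of $V$ with zero section $z\colon X\hookrightarrow\bfV$; the section $s$ satisfies $p\circ s=\id_X$. The engine is the self-intersection of the zero section: in $A^*(\bfV)$ one has $z_*(1_X)=c_e(p^*V)$, since $z$ is a regular embedding with normal bundle $z^*p^*V\cong V$ and the self-intersection formula computes $z_*(1_X)$ as the top Chern class of the pulled-back normal bundle. Because $Z$ is the scheme-theoretic preimage $s^{-1}\bigl(z(X)\bigr)$ and $s$ is regular with $\codim Z=e$, pulling the zero-section class back along $s$ returns the honest fundamental class, $s^*\bigl(z_*(1_X)\bigr)=i_*(1_Z)=[Z]$. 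Combining this with the naturality of Chern classes and the identity $s^*p^*V=(p\circ s)^*V=V$ yields
\[
[Z]=s^*\bigl(z_*(1_X)\bigr)=s^*\bigl(c_e(p^*V)\bigr)=c_e\bigl(s^*p^*V\bigr)=c_e(V),
\]
as claimed.

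The main obstacle is justifying the middle equality $s^*\bigl(z_*(1_X)\bigr)=[Z]$ in a theory lacking a moving lemma, where one cannot simply invoke transversality. The rigorous route is the deformation to the normal cone: there $s^*z_*(1_X)$ is computed as a Gysin pullback against the normal cone of $Z$ in $X$, and it is precisely the regularity of $s$ — guaranteed, via the previous paragraph, by the Cohen-Macaulay hypothesis together with $\codim Z=e$ — that forces this cone to be the honest rank-$e$ normal bundle $i^*V$, so that the excess bundle is trivial and no correction term appears. Establishing that this deformation-theoretic computation is available in $\Omega^*$, and hence via $\vartheta$ in $I_{2m}^*$, is the substantive part; the rest is formal.
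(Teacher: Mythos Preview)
The paper does not supply its own proof of this lemma: it is stated with attribution to \cite[Lemma~6.6.7]{LevineMorel} and \cite[Example~14.1.1]{FultonIntersection} and used as a black box. Your sketch is a faithful reconstruction of what those references contain---Fulton's example gives the commutative-algebra step (Cohen--Macaulay plus correct codimension forces the section to be regular), and Levine--Morel's lemma gives the Chern-class identity via the zero-section self-intersection and the refined Gysin machinery---together with the correct observation that it suffices to prove the identity in $\Omega^*$ and transport it along $\vartheta$.

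One caveat worth flagging: you write $[Z]=i_*(1_Z)$, but nothing in the hypotheses guarantees that $Z$ is smooth, so neither $I_{2m}^*(Z)$ nor the pushforward $i_*$ along $i\colon Z\hookrightarrow X$ is available in the oriented-cohomology formalism on $\SM$. To make sense of $[Z]$ one must either pass to the Borel--Moore extension $\Omega_*$, where arbitrary finite-type schemes carry fundamental classes and projective pushforwards exist, or simply note that in the paper's only application (Proposition~\ref{chern prod}) the zero locus $\widetilde{X}^{KL}_\lambda$ is smooth by construction, so the difficulty evaporates. Either route closes the gap; your deformation-to-the-normal-cone outline is otherwise on target.
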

In the next section, we will compute the class $\kappa_{\lambda}$ by pushing forward the product of Chern classes (\ref{Y_r prod}) along the tower of projective bundles (\ref{tower}). For that, we conclude this section with the following formula, which allows one to deal with each stage of the tower.
\begin{lem}[The $i$-th stage pushforward formula]\label{rth first} 
Let $\alpha_i:=c_{n-d}((D_i/D_{i-1})^{\vee}\otimes E/S)$ and  $\calA_m^{(\ell)}:=\calS_{m}((S- E/F^{\ell})^{\vee})$. We have
\begin{equation}\label{eq:push A}
\pi_{i*}\big(\tau_i^s\alpha_i\big)
=\sum_{p\geq 0}c_p(D_{i-1})\left(\calA_{\lambda_i+s-p}^{(\lambda_i-i+d)} + \alpha_{2m} \sum_{l=1}^{2m-1} \gamma_l^{(2m)} p_l(D_{i-1})\calA_{\lambda_i+s-p+2m-l}^{(\lambda_i-i+d)}\right)
\end{equation}
for all $s\geq 0$.
\end{lem}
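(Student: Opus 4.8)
The plan is to compute the pushforward $\pi_{i*}(\tau_i^s \alpha_i)$ by the single projective bundle $\pi_i \colon \PP(F^{\lambda_i - i + d}/D_{i-1}) \to \PP(F^{\lambda_{i-1}-(i-1)+d}/D_{i-2})$, treating everything over the base of this stage as coefficients. First I would rewrite the top Chern class $\alpha_i = c_{n-d}((D_i/D_{i-1})^{\vee} \otimes E/S)$. Here $(D_i/D_{i-1})^{\vee}$ is the dual tautological line bundle $\calO(1)$ of the projective bundle, so $\tau_i = c_1(\calO(1))$ is exactly the variable in which the projective bundle formula is phrased. Applying Lemma \ref{lemma tensor} with $L = (D_i/D_{i-1})^{\vee}$ and $E$ there taken to be $E/S$ (rank $n-d$), I get
\[
\alpha_i = \sum_{l=0}^{n-d} c_l(E/S)\,\tau_i^{n-d-l}\left[1 + \alpha_{2m}\sum_{j=1}^{2m-1}\gamma_j^{(2m)} p_j(E/S)\,\tau_i^{2m-j}\right].
\]
Thus $\tau_i^s \alpha_i$ becomes a $\ZZ$-linear combination (with coefficients that are Chern/power classes of $E/S$, hence honest classes on the base) of powers $\tau_i^{s + n - d - l}$ and $\tau_i^{s + n - d - l + 2m - j}$.

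Next I would push forward each monomial $\tau_i^N$. The key geometric input is Proposition \ref{push of tensor}: the projective bundle at stage $i$ is $\PP^*$ of a vector bundle of rank $\lambda_i - i + d - (i-1) = \lambda_i - 2i + d + 1$, namely $F^{\lambda_i - i + d}/D_{i-1}$; wait — more carefully, $\PP(F^{\lambda_i-i+d}/D_{i-1})$ with the convention $D_i/D_{i-1} = \calO(-1)$ means we are looking at $\PP^*$ of $(F^{\lambda_i-i+d}/D_{i-1})^{\vee}$, or equivalently the bundle of hyperplanes in a bundle of rank $e = (n - \lambda_i + i - d) - (i-1) = n - \lambda_i + 1 - d + \cdots$; I would fix the exact rank bookkeeping so that Proposition \ref{push of tensor} (or rather its avatar $\pi_*(\tau^s c_f(\calQ \otimes F^{\vee})) = \SE_{s+f-e+1}(E-F)$, specialized to $f = 0$, giving $\pi_*(\tau^s) = \SE_{s - e + 1}(\text{bundle})$) yields
\[
\pi_{i*}(\tau_i^N) = \SE_{N - e_i + 1}\big((F^{\lambda_i - i + d}/D_{i-1})\big),
\]
where $e_i = \rk(F^{\lambda_i-i+d}/D_{i-1})$. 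Since $F^{\lambda_i-i+d}$ has corank $\lambda_i - i + d$ in $E$, hence rank $n - \lambda_i + i - d$, and $D_{i-1}$ has rank $i-1$, we get $e_i = n - \lambda_i - d + 1$, so $N - e_i + 1 = N - n + \lambda_i + d$. With $N = s + n - d - l$ this gives $\SE_{\lambda_i + s - l}(F^{\lambda_i-i+d}/D_{i-1})$, and similarly $\SE_{\lambda_i + s - l + 2m - j}$ for the $\alpha_{2m}$-terms.

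Then I would convert the Segre classes of $F^{\lambda_i-i+d}/D_{i-1}$ into the $\calA$-classes of the statement. Using the exact sequence $0 \to S \to E \to E/S \to 0$ and $0 \to D_{i-1} \to F^{\lambda_i-i+d} \to F^{\lambda_i-i+d}/D_{i-1}\to 0$, one has $[F^{\lambda_i-i+d}/D_{i-1}] = [F^{\lambda_i-i+d}] - [D_{i-1}]$ in $K^0$, and on $\widetilde{X}^{KL}_\lambda$ (or after the relevant reductions) $\SE_k(F^{\lambda_i-i+d}/D_{i-1})$ relates to $\SE_k((S - E/F^{\lambda_i-i+d})^{\vee}) = \calA^{(\lambda_i - i + d)}_k$ — here I would invoke Proposition \ref{relSegre} to expand $\SE_t$ of a difference, pulling out the contribution of $D_{i-1}$ as a factor $c_t(D_{i-1}^{\vee})$ times a power-sum correction $[1 + \alpha_{2m}\sum \gamma p(D_{i-1}^{\vee})t^{-i}]$. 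Recombining the $c_p(D_{i-1})$ coefficients from this expansion with the $c_l(E/S), p_j(E/S)$ coefficients produced above, and using $\alpha_{2m}^2 = 0$ to kill cross terms, should collapse everything into the single clean sum on the right-hand side of \eqref{eq:push A}, with the $l$-sum running over $1,\dots,2m-1$ exactly as in Lemma \ref{lemFGL2m}.

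The main obstacle I expect is the bookkeeping of which symmetric-function coefficients ($c_l(E/S)$ and $p_j(E/S)$ from Lemma \ref{lemma tensor} versus $c_p(D_{i-1})$ and $p_l(D_{i-1})$ from the Segre-of-a-difference expansion) recombine correctly: one must check that after pushing forward, the "$E/S$-side" complete symmetric functions reassemble into the relative Segre classes $\calA^{(\ell)}$ defined via $S - E/F^\ell$, i.e. that $h_\bullet(E/S)$ and $h_\bullet$ of the pushforward bundle merge — this is precisely the kind of Segre/Chern duality identity ($\SE_t(E)c_{-t}(E) = $ low-degree terms, Proposition \ref{prop R}) that was set up for this purpose, but applying it with the extra $\alpha_{2m}$ power-sum decorations and making sure no $\alpha_{2m}^2$ survives requires care. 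Everything else — the projective bundle pushforward of powers of $\tau_i$, the rank arithmetic — is routine once Proposition \ref{push of tensor} and Proposition \ref{relSegre} are in hand.
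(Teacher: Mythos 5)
Your proposal follows essentially the same route as the paper: the paper's proof simply applies Proposition \ref{push of tensor} (with $\calQ=(D_i/D_{i-1})^{\vee}$, $F^{\vee}=E/S$, $f=n-d$) to obtain $\SE_{\lambda_i+s}\big((S-E/F^{\lambda_i-i+d})^{\vee}-D_{i-1}^{\vee}\big)$ and then invokes Proposition \ref{relSegre} to peel off the $D_{i-1}$ contribution, whereas you inline the proof of Proposition \ref{push of tensor} (via Lemma \ref{lemma tensor} plus the pushforward of powers of $\tau_i$) before doing the same second step. The only point to tidy is the dual bookkeeping you yourself flag: since $\tau_i=c_1(\calO(1))$ on $\PP^*\big((F^{\lambda_i-i+d}/D_{i-1})^{\vee}\big)$, the pushforward yields Segre classes of the \emph{dual} virtual bundle $(F^{\lambda_i-i+d}/D_{i-1})^{\vee}-(E/S)^{\vee}=(S-E/F^{\lambda_i-i+d})^{\vee}-D_{i-1}^{\vee}$, which is exactly what matches the $\calA^{(\ell)}$'s --- this is a $K^0$ identity on the flag bundle, so no restriction to $\widetilde{X}^{KL}_\lambda$ is needed.
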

\begin{proof}
We apply Proposition \ref{push of tensor} to $\PP^*((F^{\lambda_i-i+d}/D_{i-1})^{\vee})=\PP(F^{\lambda_i-i+d}/D_{i-1})$ with the tautological quotient bundle $\calQ=(D_i/D_{i-1})^{\vee}$. Then the result follows from Proposition \ref{relSegre}.
\end{proof}
\subsection{Computing $\kappa_{\lambda}$}\label{secAkappa}

First, we recall notations from \cite{HIMN} for the ring of certain formal Laurent series, necessary for the computation of the class $[X_{\lambda}]$. Let $R=\oplus_{m\in \ZZ}R_m$ be a commutative $\ZZ$-graded ring. Let  $t_1,\ldots,t_r$ be indeterminates. For ${\sfs}=(s_1,\ldots,s_r)\in \ZZ^r$, we denote $t^{\sfs}=t_1^{s_1}\cdots t_r^{s_r}$. A formal Laurent series of degree $m$ in the variables $t_1,\ldots,t_r$ with coefficients in $R$ is given by $f(t)=\sum_{\sfs\in \ZZ^r}a_{\sfs}t^{\sfs}$ 
where $a_{\sfs}\in R_{m-|{\sfs}|}$ for all ${\sfs}\in \ZZ^r$ with $|{\sfs}|=\sum_{i=1}^rs_i$. Its support $\supp f$ is defined as
\[
\supp f = \{ \sfs \in \ZZ^r \ |\ a_{\sfs} \not=0\}.
\]
For each $m\in \ZZ$, let $\scL_m^R$ denote the set of  all formal Laurent series $f(t)$ of degree $m$ such that there is $\sfm \in \ZZ^r$ such that $\sfm + \supp f$ is contained in the cone $C \subset \ZZ^r$ defined by inequalities $s_1\geq0, \; s_1+s_2\geq 0, \;\cdots, \; s_1+\cdots + s_{r} \geq 0$. The direct sum $\scL^R:=\oplus_{m\in \ZZ}\scL_m^R$ is a graded ring in an obvious manner. For each $i=1,\ldots,r,$ let $\scL^{R,i}$ denote the subring of $\scL^R$ consisting of series that do not contain negative power of $t_1,\ldots,t_{i-1}.$ In particular we have $\scL^{R,1}=\scL^R$. For each $m\in \ZZ$, let $R[[t_1,\ldots,t_r]]_m$ denote the set of formal power series in $t_1,\ldots,t_r$ of homogeneous degree $m$ with $\deg(t_i)=1.$  We define  the ring $R[[t_1,\ldots,t_r]]_{\gr}$ of graded formal power series to be $\oplus_{m\in \ZZ}R[[t_1,\ldots,t_r]]_m$. Note that $\scL^{R,i}$ is a graded $R[[t_1,\ldots,t_r]]_{\gr}$-module. 

Let us apply the above notation to $R=I_{2m}^*(\Gr_d(E))$, which is a graded algebra over $Q_{2m}$.  We can uniquely define a homomorphism of  graded $R[[t_1,\ldots,t_{i-1}]]_{\gr}$-modules 
\[
\phi_i:\scL^{R,i} \to I_{2m}^*(\mathbb{P}(F^{\lambda_{i-1}-i+1+d}/D_{i-2}))
\] 
by setting 
\[
t_1^{s_1}t_2^{s_2}\cdots t_r^{s_r}\mapsto  \tau_1^{s_1}\cdots \tau_{i-1}^{s_{i-1}}\calA_{s_{i}}^{(\lambda_i-i+d)}\cdots  \calA_{s_r}^{(\lambda_r-r+d)}.
\]
\begin{lem}\label{eq:push A phi}
Let $\alpha_i:=c_{n-d}((D_i/D_{i-1})^{\vee}\otimes E/S)$. We have
\begin{equation*}
\pi_{i*}\big(\tau_i^s\alpha_i\big)
=\phi_i\left(t_i^{\lambda_i+s}\left(\prod_{j=1}^{i-1} (1-t_j/t_i)\right)\left( 1+ \alpha_{2m} t_i^{2m}\sum_{l=1}^{2m-1} \gamma_l^{(2m)} p_l(-t_1/t_i,\dots,-t_{i-1}/t_i)\right)\right)
\end{equation*}
for all $s\geq 0$.
\end{lem}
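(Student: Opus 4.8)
The plan is to derive this reformulation directly from Lemma~\ref{rth first} by pushing the right-hand side of (\ref{eq:push A}) through the definition of $\phi_i$. Two translation facts do all the work. First, since $D_{i-1}$ carries the tautological filtration $D_1\subset\cdots\subset D_{i-1}$ with $c_1((D_j/D_{j-1})^\vee)=\tau_j$, the Chern roots of $D_{i-1}^\vee$ are precisely $\tau_1,\ldots,\tau_{i-1}$; hence $c_p(D_{i-1})=(-1)^p e_p(\tau_1,\ldots,\tau_{i-1})$ and $p_l(D_{i-1})=(-1)^l p_l(\tau_1,\ldots,\tau_{i-1})$. Second, from the defining formula for $\phi_i$ together with its $R[[t_1,\ldots,t_{i-1}]]_{\gr}$-linearity, for any polynomial $g$ in $t_1,\ldots,t_{i-1}$ and any integer $N$ one has $\phi_i\big(g(t_1,\ldots,t_{i-1})\,t_i^N\big)=g(\tau_1,\ldots,\tau_{i-1})\,\calA_N^{(\lambda_i-i+d)}$; here the auxiliary factors $\calA_0^{(\lambda_{i+1}-(i+1)+d)}\cdots\calA_0^{(\lambda_r-r+d)}$ coming from the zero exponents in $t_{i+1},\ldots,t_r$ drop out because $\calA_0^{(\ell)}=1$, which is immediate from Proposition~\ref{prop R} (the constant term of the Segre polynomial is $1$).

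With these in hand I would expand the argument of $\phi_i$ into a Laurent polynomial: using $\prod_{j=1}^{i-1}(1-t_j/t_i)=\sum_{p=0}^{i-1}(-1)^p e_p(t_1,\ldots,t_{i-1})\,t_i^{-p}$ and $t_i^{2m}\,p_l(-t_1/t_i,\ldots,-t_{i-1}/t_i)=(-1)^l\,p_l(t_1,\ldots,t_{i-1})\,t_i^{2m-l}$, the bracketed expression becomes
\[
\sum_{p=0}^{i-1}(-1)^p e_p(t_1,\ldots,t_{i-1})\,t_i^{\lambda_i+s-p}+\alpha_{2m}\sum_{p=0}^{i-1}\sum_{l=1}^{2m-1}(-1)^{p+l}\gamma_l^{(2m)}\,e_p(t_1,\ldots,t_{i-1})\,p_l(t_1,\ldots,t_{i-1})\,t_i^{\lambda_i+s-p+2m-l}.
\]
Applying $\phi_i$ termwise via the two facts above sends $(-1)^p e_p(t_1,\ldots,t_{i-1})\mapsto c_p(D_{i-1})$, sends $(-1)^l p_l(t_1,\ldots,t_{i-1})\mapsto p_l(D_{i-1})$, and sends each power $t_i^{M}\mapsto\calA_M^{(\lambda_i-i+d)}$. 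The outcome is exactly the right-hand side of (\ref{eq:push A}), which equals $\pi_{i*}(\tau_i^s\alpha_i)$ by Lemma~\ref{rth first}; this proves the claimed identity.

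There is no serious obstacle here: the mathematical content lies entirely in Lemma~\ref{rth first}, and the present statement is its repackaging in the graded Laurent-series formalism of Section~\ref{secAkappa}. The only points deserving care are bookkeeping ones: checking that the Laurent polynomial fed into $\phi_i$ indeed lies in the domain $\scL^{R,i}$ (clear, since it has only finitely many monomials, no negative powers of $t_1,\ldots,t_{i-1}$, and zero exponents in $t_{i+1},\ldots,t_r$), verifying $\calA_0^{(\ell)}=1$ so that the trivial factors vanish, and keeping the signs consistent when passing between $e_p,p_l$ of the $\tau_j$'s and the Chern and power-sum classes of $D_{i-1}$. Since $\phi_i$ is only an $R[[t_1,\ldots,t_{i-1}]]_{\gr}$-module map rather than a ring homomorphism, one must in particular split off the $t_1,\ldots,t_{i-1}$-part of each monomial from its residual $t_i$-power before evaluating, which is precisely what the second translation fact above ensures.
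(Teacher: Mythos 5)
Your overall route is the same as the paper's: unwind the right-hand side of (\ref{eq:push A}) through $\phi_i$, using that the Chern roots of $D_{i-1}$ are $-\tau_1,\dots,-\tau_{i-1}$ (so $c_p(D_{i-1})=e_p(-\tau_1,\dots,-\tau_{i-1})$ and $p_l(D_{i-1})=p_l(-\tau_1,\dots,-\tau_{i-1})$, the formal inverse being $\boxminus u=-u$), and then recognize $\sum_{p}e_p(-t_1,\dots,-t_{i-1})t_i^{-p}=\prod_{j=1}^{i-1}(1-t_j/t_i)$ and $p_l(-t_1,\dots,-t_{i-1})t_i^{-l}=p_l(-t_1/t_i,\dots,-t_{i-1}/t_i)$. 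The Laurent-polynomial expansion you write down and the sign bookkeeping are correct, and this is exactly the paper's (very terse) argument.

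There is, however, one concretely false auxiliary claim: $\calA_0^{(\ell)}\neq 1$ in general, and this does \emph{not} follow from Proposition \ref{prop R}. Although $c_{-t}(-E+F)$ has constant term $1$, the second factor in (\ref{dfRelSeg}) contains the negative powers $t^{-i}$, which pair with the positive powers of $t$ in $c_{-t}(-E+F)$ and contribute to the coefficient of $t^0$. Explicitly, Proposition \ref{prop Segre} gives $\calS_0(E)=1-\alpha_{2m}\sum_{l=0}^{2m-1}(-1)^l\gamma_l^{(2m)}p_l(E)h_{2m-l}(E)$, which already for $m=1$ equals $1+\alpha_2c_2(E)$; one checks the same directly from Vishik's pushforward formula. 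Consequently the trailing factors $\calA_0^{(\lambda_j-j+d)}$ for $j>i$ do not drop out for the reason you give, and with the literal definition of $\phi_i$ the displayed identity would pick up a spurious $\alpha_{2m}$-correction of the form $(\text{degree-zero part})\cdot\sum_{j>i}(\calA_0^{(\lambda_j-j+d)}-1)$. To be fair, the paper's own proof silently elides the same point: the statement must be read with the convention that $\phi_i$ produces only the factors $\calA_{s_j}^{(\lambda_j-j+d)}$ for the variables actually occurring, or else one should note that in the only place the lemma is used --- the iterated pushforward proving Theorem \ref{mainthmA} --- the variables $t_{i+1},\dots,t_r$ always carry the genuine exponents generated by the earlier stages of the tower, so that no $\calA_0$ factor ever arises. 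Your writeup should adopt one of these fixes explicitly; as written, the justification via $\calA_0^{(\ell)}=1$ is incorrect even though the rest of the computation is sound.
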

\begin{proof}
By (\ref{eq:push A}), we find
\begin{eqnarray*}
&&\pi_{i*}\big(\tau_i^s\alpha_i\big)\\
&=&\phi_i\left(t_i^{\lambda_i+s}\sum_{p\geq 0}e_p(-t_1,\dots,-t_{i-1})t_i^{-p}\left( 1+ \alpha_{2m} t_i^{2m}\sum_{l=1}^{2m-1} \gamma_l^{(2m)} p_l(-t_1,\dots,-t_{i-1})t_i^{-l} \right)\right).
\end{eqnarray*}
Thus the claim follows from the definitions of $e_p$ and $p_l$.
\end{proof}
Recall that the multi-Schur determinant is denoted as follows. Let $c^{(i)}=\{ c_k^{(i)}, k\in \ZZ\}$ be a set of infinite variables for each $i=1,\dots,r$. Let $\ell=(\ell_1,\dots, \ell_r) \in \ZZ^r$.
\[
s_{\ell}(c^{(1)},\dots, c^{(r)}) := \det(c_{\ell_i+j-i}^{(i)})_{1\leq i,j\leq r}.
\]
In this notation, we have
\[
\Det[t_1^{\ell_1}\cdots t_r^{\ell_r}] = s_{\ell}(c^{(1)},\dots, c^{(r)})|_{c_k^{(i)}=t_i^k}.
\]
and
\[
\Det[\calA^{(k_1)}_{\ell_1}\cdots \calA^{(k_r)}_{\ell_r}]=s_{\ell}(c^{(1)},\dots, c^{(r)})|_{c_k^{(i)}=\calA_k^{(i)}}.
\]
\begin{thm}\label{mainthmA}
Let $\lambda$ be a partition in $\calP_d(n)$ with length $r$. Let $\calA_m^{(\ell)}:=\calS_{m}((S- E/F^{\ell})^{\vee})$ and $k_i:=\lambda_i-i+d$. The Kempf-Laksov class $\kappa_{\lambda}$ in $I_{2m}^*(\Gr_d(E))$ is given by
\begin{eqnarray*}
\kappa_{\lambda} &=& \Det\big[\calA^{(k_1)}_{\lambda_1}\cdots \calA^{(k_r)}_{\lambda_r}\big]\\
&&\ \ \ \ \ +
\alpha_{2m} \sum_{l=-m+1}^{m-1} (-1)^{m+l}\gamma_{m+l}^{(2m)} \left( \sum_{1\leq a<b \leq r} 
\Det\big[\calA^{(k_1)}_{\lambda_1}\cdots \calA^{(k_a)}_{\lambda_a+m+l}\cdots \calA^{(k_b)}_{\lambda_b+m-l}  \cdots \calA^{(k_r)}_{\lambda_r}\big]
\right).
\end{eqnarray*}
\end{thm}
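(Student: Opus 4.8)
The plan is to push forward the product of Chern classes $[\widetilde{X}^{KL}_\lambda] = \prod_{i=1}^r c_{n-d}((D_i/D_{i-1})^\vee \otimes E/S)$ through the tower of projective bundles (\ref{tower}), one stage at a time, using Lemma \ref{eq:push A phi} to translate each pushforward into an operation on graded Laurent series, and then simplifying the resulting symmetric-function expression via the antisymmetrization trick that produces the multi-Schur determinant. Concretely, I would first establish by downward induction on $i$ (from $i=r$ to $i=1$) a formula for $\pi_{i*}\pi_{(i+1)*}\cdots\pi_{r*}[\widetilde{X}^{KL}_\lambda]$ living in $I_{2m}^*(\PP(F^{\lambda_{i-1}-i+1+d}/D_{i-2}))$, expressed through $\phi_i$ applied to a Laurent series of the shape $\prod_{j=i}^r t_j^{\lambda_j}\cdot(\text{Vandermonde-type factor})\cdot(1 + \alpha_{2m}\cdot(\text{correction}))$. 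Because $\alpha_{2m}^2=0$, at every stage the $\alpha_{2m}$-correction terms only ever interact with the degree-zero part of the rest, which keeps the bookkeeping linear in the correction.

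The key computation is the composite of all stages. After iterating Lemma \ref{eq:push A phi}, the leading (i.e.\ $\alpha_{2m}$-free) part is the classical Kempf-Laksov computation: the product $\prod_{i<j}(1 - t_i/t_j)$ of Vandermonde factors, together with the monomials $\prod_i t_i^{\lambda_i}$, gets symmetrized against the map $\phi_1$, and a standard manipulation (clearing denominators by $\prod t_i^{\,r-i}$ and recognizing the Vandermonde determinant) identifies $\phi_1\big(\prod_i t_i^{\lambda_i}\prod_{i<j}(1-t_i/t_j)\big)$ with $\Det[\calA^{(k_1)}_{\lambda_1}\cdots\calA^{(k_r)}_{\lambda_r}]$, using $k_i = \lambda_i - i + d$ and $\phi_1(t_i^k) = \calA_k^{(k_i)}$. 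This reproduces the first line of the theorem. For the $\alpha_{2m}$-correction, I collect all the terms $\alpha_{2m}\,t_i^{2m}\sum_{l=1}^{2m-1}\gamma_l^{(2m)} p_l(-t_1/t_i,\dots,-t_{i-1}/t_i)$ coming from the various stages. Each such term, after being carried through the remaining pushforwards and multiplied by the (degree-zero truncation of the) Vandermonde factor, contributes — for a fixed pair of indices — a monomial correction that shifts two of the exponents $\lambda_a \mapsto \lambda_a + (2m-l)$, $\lambda_b \mapsto \lambda_b + l$ inside an otherwise unchanged multi-Schur determinant; reindexing by $l = m-l'$, i.e.\ writing the shifts symmetrically as $m+l$ and $m-l$ with $l$ ranging over $-m+1,\dots,m-1$, and tracking the sign $(-1)^l$ from $p_l(-t_1,\dots)$ versus $p_l(t_1,\dots)$ gives the factor $(-1)^{m+l}$ and the coefficient $\gamma_{m+l}^{(2m)}$ in the statement. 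Summing over all pairs $1\le a<b\le r$ yields the second line.

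I expect the main obstacle to be the combinatorial bookkeeping in the $\alpha_{2m}$-correction: showing that, after the full composite of $r$ pushforwards, every correction monomial $p_l$ that appears (regardless of which stage $i$ it originated from) can be uniformly rewritten as a shift affecting exactly two rows of a single multi-Schur determinant, with the right sign and coefficient, and that contributions which would shift the \emph{same} row twice, or which involve $p_l$ of variables not yet "absorbed" into the determinant, either cancel or vanish by antisymmetry of the determinant. The power-sum symmetric functions $p_l$ are not multiplicative, so one cannot simply split them; the resolution is that within $\scL^R$ one has the identity $\phi_1\big(\prod_i t_i^{\lambda_i}\cdot t_a^{2m}(-t_c)^l t_a^{-l}\cdot\text{Vandermonde}\big)$ equals, up to sign, a determinant with rows $a$ and $c$ modified — and the alternating sum over the remaining index structure telescopes. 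Everything else (the projective bundle formula inputs, Lemma \ref{lemma tensor}, Proposition \ref{push of tensor}, and the $\alpha_{2m}^2=0$ truncations) is already packaged in the lemmas above and only needs to be invoked.
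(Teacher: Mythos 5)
Your proposal is correct and follows essentially the same route as the paper: the paper's proof is exactly the repeated application of Lemma \ref{eq:push A phi} to the product (\ref{Y_r prod}), followed by the Vandermonde-determinant identity that converts $\prod_i t_i^{\lambda_i}\prod_{i<j}(1-t_i/t_j)$ times the (linearized, since $\alpha_{2m}^2=0$) correction factor into the stated sum of multi-Schur determinants. The bookkeeping you flag as the main obstacle is resolved exactly as you suggest — each correction monomial $t_a^{m+l}t_b^{m-l}$ multiplies the full Vandermonde product and is absorbed as a two-row shift of a single determinant, with no double-shift terms surviving because $\alpha_{2m}^2=0$.
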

\begin{proof}
It follows from the repeated application of Lemma \ref{eq:push A phi} to (\ref{chern prod}), and the following identity of Vandermonde determinant:
\begin{eqnarray*}
&&\prod_{i=1}^r t_i^{\lambda_i}\left(\prod_{1\leq i<j\leq r} \left(1-t_it_j^{-1}\right)\right)
\left( 1+\alpha_{2m} \sum_{l=-m+1}^{m-1} (-1)^{m+l}\gamma_{m+l}^{(2m)} \left( \sum_{1\leq i<j\leq r} t_i^{m+l}t_j^{m-l} \right)\right)\\
&=&
\Det\big[t_1^{\lambda_1}\cdots t_r^{\lambda_r}\big]
+
\alpha_{2m} \sum_{l=-m+1}^{m-1} (-1)^{m+l}\gamma_{m+l}^{(2m)} \left( \sum_{1\leq a<b \leq r} 
\Det\big[t_1^{\lambda_1}\cdots t_a^{\lambda_a+m+l}\cdots t_b^{\lambda_b+m-l}  \cdots t_r^{\lambda_r}\big]
\right).
\end{eqnarray*}
\end{proof}
{
\section{Pfaffian formula for Lagrangian Grassmannian}\label{secLagC}
\subsection{Lagrangian degeneracy loci and its Kempf-Laksov resolutions}
Let $E$ be a symplectic vector bundle over $X$ with rank $2n$, \textit{i.e.}, we are given a nowhere vanishing section of $\wedge^2 E$. For a subbundle $F$ of $E$, we denote by $F^{\perp}$  the orthogonal complement of $F$ with respect to the symplectic form.
Fix a complete flag $F^{\bullet}$ of subbundles of $E$
\[
0=F^n\subset F^{n-1} \subset \cdots \subset F^1 \subset F^0 \subset F^{-1} \subset \cdots \subset F^{-n}=E,  
\]
such that  $\rk F^i=n-i$ and $(F^i)^{\perp} = F^{-i}$ for all $i$. The condition implies that $F^i$ is isotropic for each $i\geq0$. In particular, $F^0$ is Lagrangian. Let $\LG(E) \to X$ be the Lagrangian Grassmannian bundle over $X$, \textit{i.e.}, the fiber at $x \in X$ is the Grassmannian $\LG(E_x)$ of the Lagrangian subspaces of $E_x$. As before, for vector bundles we will omit the pullbacks from the notation. Let $L$ be the tautological vector bundle over $\LG(E)$. 

A strict partition $\lambda$ is an infinite sequence $(\lambda_1,\lambda_2,\cdots)$ of nonnegative integers such that all but finitely many $\lambda_i$'s are zero, and such that $\lambda_i>0$ implies $\lambda_i>\lambda_{i+1}$. The length of $\lambda$ is the number of nonzero parts. Let $\SP(n)$ be the set of all strict partitions with length at most $n$ and $\lambda_1\leq n$.

For each  strict partition $\lambda$ in $\SP(n)$ of length $r$, consider the following partial flag $F_{\lambda}^{\bullet}$ of $F^{\bullet}$
\[
F_{\lambda}^{\bullet}: \ \ F^{\lambda_1-1} \subset \cdots \subset F^{\lambda_r-1} \subset F^0.
\]
The corresponding Lagrangian degeneracy loci $X_{\lambda} \subset \LG(E)$ is given by
\[
X_{\lambda}^C = \{ (L_x, x)\in \LG(E) \ |\ \dim ( F^{\lambda_i-1}_x \cap L_x) \geq i, \ \ i=1,\dots, r\}.
\]
Consider the generalized flag bundle $\pi: \Fl(F_{\lambda}^{\bullet}) \to \LG(E)$ associated to the partial flag $F_{\lambda}^{\bullet}$ of $E$. As before, the flag bundle $\Fl(F_{\lambda}^{\bullet})$ can be obtained as a tower of projective bundles
\begin{eqnarray*}
&&\pi: \Fl(F_{\lambda}^{\bullet})=\PP(F^{\lambda_r-1}/D_{r-1}) \stackrel{\pi_r}{\longrightarrow} \PP(F^{\lambda_{r-1}-1}/D_{r-2}) \stackrel{\pi_{r-1}}{\longrightarrow} \cdots \ \ \ \ \ \ \ \ \ \ \ \ \ \ \ \ \ \ \ \ \ \ \ \ \\
&&\ \ \ \ \ \ \ \ \ \ \ \ \ \ \ \ \ \ \ \ \ \ \ \ \ \ \ \ \ \ \ \ \ \ \ \ \ \ \ \ \ \ \ \ \ \ \ \  \stackrel{\pi_3}{\longrightarrow} \PP(F^{\lambda_2-1}/D_1) \stackrel{\pi_2}{\longrightarrow} \PP(F^{\lambda_1-1})  \stackrel{\pi_1}{\longrightarrow} \LG(E).
\end{eqnarray*}
We regard $D_i/D_{i-1}$ as the tautological line bundle of $\PP(F^{\lambda_i-1}/D_{i-1})$ and denote $\tau_i:=c_1((D_i/D_{i-1})^{\vee})$.
\begin{defn}\label{defLagKL}
Let $\widetilde{\frakX}^{KL}_\lambda \subset \Fl(F_{\lambda}^{\bullet})$ be the locus where $D_r \subset L$.  It is known that $\widetilde{\frakX}^{KL}_\lambda$ is smooth and birational to $X^C_{\lambda}$ along $\pi$ (\textit{cf.} \cite{HIMN}). We call it the Lagrangian Kempf-Laksov resolution of the degeneracy loci $X^C_{\lambda}$. Define the Lagrangian Kempf-Laksov class $\kappa^C_{\lambda} \in I_{2m}^*(\LG(E))$ associated to a strict partition $\lambda \in \SP(n)$ by
\[
\kappa^C_{\lambda} :=[\widetilde{\frakX}^{KL}_\lambda \to \LG(E)] =\pi_*[\widetilde{\frakX}^{KL}_\lambda \to \Fl(F_{\lambda}^{\bullet})].
\]
\end{defn}
By adapting \cite[Section 6.3]{HIMN} to the current situation we obtain the following expression.
\begin{prop}\label{propLagKL}
In $I_{2m}^*(\Fl(F_{\lambda}^{\bullet}))$, we have
\begin{eqnarray}\label{Yr prod}
[\widetilde{\frakX}^{KL}_\lambda \to \Fl(F_{\lambda}^{\bullet})] &=&\prod_{i=1}^r c_{n-i+1}((D_i/D_{i-1})^{\vee}\otimes D_{i-1}^{\perp}/L),
\end{eqnarray}
and as a consequence, we have
\begin{equation}\label{kappaCprod}
\kappa^C_{\lambda} = \pi_*\left(\prod_{i=1}^r c_{n-i+1}((D_i/D_{i-1})^{\vee}\otimes D_{i-1}^{\perp}/L)\right).
\end{equation}
\end{prop}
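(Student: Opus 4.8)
The plan is to reproduce the stage-by-stage argument of \cite[Section 6.3]{HIMN}, realizing $\widetilde{\frakX}^{KL}_\lambda$ as the last term of a chain of regular zero loci inside $\Fl(F_{\lambda}^{\bullet})$ and invoking Lemma \ref{gaussbonnet} at each stage. For $0\leq j\leq r$ set
\[
\frakX^{(j)}:=\{\, D_j\subseteq L\,\}\subseteq \Fl(F_{\lambda}^{\bullet}),
\]
so that $\frakX^{(0)}=\Fl(F_{\lambda}^{\bullet})$, $\frakX^{(r)}=\widetilde{\frakX}^{KL}_\lambda$, and $\frakX^{(r)}\subseteq\cdots\subseteq\frakX^{(1)}\subseteq\frakX^{(0)}$.

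First I would identify, on $\frakX^{(j-1)}$, the section whose vanishing defines $\frakX^{(j)}$. On $\frakX^{(j-1)}$ one has $D_{j-1}\subseteq L$, and since $L$ is Lagrangian we get $L=L^{\perp}\subseteq D_{j-1}^{\perp}$; hence $D_{j-1}^{\perp}/L$ is a genuine vector bundle of rank $n-j+1$ there. Using the strictness of $\lambda$ one has $D_{j-1}\subseteq F^{\lambda_{j-1}-1}\subseteq (F^{\lambda_j-1})^{\perp}$, so that $D_j\subseteq F^{\lambda_j-1}\subseteq D_{j-1}^{\perp}$; the composite $D_j/D_{j-1}\hookrightarrow D_{j-1}^{\perp}/D_{j-1}\twoheadrightarrow D_{j-1}^{\perp}/L$ therefore defines a tautological section $s_j$ of $(D_j/D_{j-1})^{\vee}\otimes(D_{j-1}^{\perp}/L)$ on $\frakX^{(j-1)}$, whose zero scheme is precisely $\frakX^{(j)}$. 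In contrast with the type $A$ case of Proposition \ref{chern prod}, the quotient $D_{j-1}^{\perp}/L$ is a bundle only on $\frakX^{(j-1)}$; over all of $\Fl(F_{\lambda}^{\bullet})$ the symbol $D_{j-1}^{\perp}/L$, and accordingly each Chern class in (\ref{Yr prod}), is to be read through the virtual class $[D_{j-1}^{\perp}]-[L]$.

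Next comes the regularity check, which is the one genuinely geometric step and the place where one truly adapts \cite[Section 6.3]{HIMN} rather than quoting formal machinery. The scheme $\frakX^{(0)}$ is smooth, and, after forgetting the last $r-j$ projective-bundle factors of the tower, $\frakX^{(j)}$ is a tower of projective bundles over the Lagrangian Kempf--Laksov resolution of the truncated strict partition $(\lambda_1,\dots,\lambda_j)$; as that resolution is smooth of codimension $\lambda_1+\cdots+\lambda_j$ in $\LG(E)$, one deduces that $\frakX^{(j)}$ is smooth and $\codim_{\frakX^{(j-1)}}\frakX^{(j)}=n-j+1=\rk\big((D_j/D_{j-1})^{\vee}\otimes D_{j-1}^{\perp}/L\big)$. (Equivalently, each $s_j$ vanishes in codimension at most $n-j+1$, so $\codim_{\Fl(F_{\lambda}^{\bullet})}\frakX^{(r)}\leq\sum_{i=1}^{r}(n-i+1)$, and equality — hence equality at every stage — is forced by the birationality of $\widetilde{\frakX}^{KL}_\lambda$ with $X^C_{\lambda}$.) Since $\frakX^{(j-1)}$ is Cohen--Macaulay and $s_j$ has the expected vanishing codimension, Lemma \ref{gaussbonnet} yields, in $I_{2m}^{n-j+1}(\frakX^{(j-1)})$,
\[
[\frakX^{(j)}]=c_{n-j+1}\big((D_j/D_{j-1})^{\vee}\otimes D_{j-1}^{\perp}/L\big).
\]

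Finally I would push these identities forward to $I_{2m}^*(\Fl(F_{\lambda}^{\bullet}))$. Along the closed embedding $\frakX^{(j-1)}\hookrightarrow\Fl(F_{\lambda}^{\bullet})$ the projection formula applies, since $(D_i/D_{i-1})^{\vee}$, $D_{i-1}^{\perp}$ and $L$ are all pulled back from $\Fl(F_{\lambda}^{\bullet})$, so that $c_{n-i+1}\big((D_i/D_{i-1})^{\vee}\otimes D_{i-1}^{\perp}/L\big)$ on $\frakX^{(i-1)}$ is the restriction of the corresponding virtual Chern class on $\Fl(F_{\lambda}^{\bullet})$; an induction on $j$ then gives
\[
[\frakX^{(j)}\to\Fl(F_{\lambda}^{\bullet})]=\prod_{i=1}^{j} c_{n-i+1}\big((D_i/D_{i-1})^{\vee}\otimes D_{i-1}^{\perp}/L\big)\quad\text{in }I_{2m}^*(\Fl(F_{\lambda}^{\bullet})).
\]
Taking $j=r$ is (\ref{Yr prod}), and (\ref{kappaCprod}) follows immediately by applying $\pi_*$ and Definition \ref{defLagKL}. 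Apart from the regularity of the sections $s_j$, every ingredient — Lemma \ref{gaussbonnet}, the projection and Whitney formulas, and the bookkeeping of ranks — is routine.
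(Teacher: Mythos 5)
Your proof is correct and is precisely the argument the paper has in mind: the paper proves Proposition \ref{propLagKL} only by citing the adaptation of \cite[Section 6.3]{HIMN}, i.e.\ the iterated-zero-locus construction with Lemma \ref{gaussbonnet} applied at each stage and the projection formula to assemble the product, which is exactly what you have written out (including the correct observation that $D_{i-1}^{\perp}/L$ must be read as the virtual class $[D_{i-1}^{\perp}]-[L]$ away from $\frakX^{(i-1)}$). The only soft spot is the parenthetical ``equivalently'' dimension argument, which would need every component of each intermediate locus to meet $\frakX^{(r)}$; but your primary argument via the projective-bundle tower over the truncated resolution makes that aside unnecessary.
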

To compute $\kappa^C_{\lambda}$ by pushing forward (\ref{Yr prod}) along the projective tower, we conclude this section with the following formula, which allows one to deal with each stage of the tower.
\begin{lem}\label{lem: kazarian push C}
Let $\calC_k^{(\ell)}:=\calS_{k}((L-E/F^{\ell})^{\vee})$ for each $k\in \ZZ$ and $\ell \in \{0,1,\dots, n-1\}$and $\alpha_i:=c_{n-i+1}((D_i/D_{i-1})^{\vee}\otimes D_{i-1}^{\perp}/L)$. For each integer $s\geq 0$, we have
\begin{eqnarray*}
\pi_{i*}\left(\tau_i^s\alpha_i\right)
&=&\sum_{q=0}^{\infty}
c_q(D_{i-1} - D_{i-1}^{\vee})
\left(\calC_{\lambda_i+s-q}^{(\lambda_i-1)}+ \alpha_{2m} \sum_{a=1}^{2m-1} \gamma_a^{(2m)} p_a(D_{i-1} - D_{i-1}^{\vee})\calC_{\lambda_i+s-q+2m-a}^{(\lambda_i-1)}\right).
\end{eqnarray*}
\end{lem}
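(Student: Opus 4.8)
\emph{Strategy.} The plan is to mimic the proof of Lemma~\ref{rth first}, the only genuinely new input being two $K$-theoretic identities supplied by the symplectic form. First I would recognize the $i$-th stage of the tower as a dual projective bundle,
\[
\PP(F^{\lambda_i-1}/D_{i-1})=\PP^*\big((F^{\lambda_i-1}/D_{i-1})^{\vee}\big)\stackrel{\pi_i}{\longrightarrow}\PP(F^{\lambda_{i-1}-1}/D_{i-2}),
\]
with tautological quotient line bundle $\calO(1)=(D_i/D_{i-1})^{\vee}$ and $\xi:=c_1(\calO(1))=\tau_i$. Since $D_{i-1}^{\perp}/L$ is treated on $\Fl(F_{\lambda}^{\bullet})$ exactly as in \cite[Section 6.3]{HIMN}, we have $\alpha_i=c_{n-i+1}\big(\calQ\otimes F'^{\vee}\big)$ with $\calQ=\calO(1)$ and $F'=(D_{i-1}^{\perp}/L)^{\vee}$ of rank $f'=n-i+1$, while $E':=(F^{\lambda_i-1}/D_{i-1})^{\vee}$ has rank $e'=n-\lambda_i-i+2$. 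Applying Proposition~\ref{push of tensor} to $E'$ and $F'$ and observing that $s+f'-e'+1=\lambda_i+s$, one obtains
\[
\pi_{i*}\big(\tau_i^s\alpha_i\big)=\SE_{\lambda_i+s}\big(E'-F'\big)=\SE_{\lambda_i+s}\big((F^{\lambda_i-1}/D_{i-1})^{\vee}-(D_{i-1}^{\perp}/L)^{\vee}\big).
\]

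Next I would simplify the virtual bundle inside. Dualization is additive on the Grothendieck group, so the short exact sequence $0\to D_{i-1}\to F^{\lambda_i-1}\to F^{\lambda_i-1}/D_{i-1}\to 0$ gives $[(F^{\lambda_i-1}/D_{i-1})^{\vee}]=[(F^{\lambda_i-1})^{\vee}]-[D_{i-1}^{\vee}]$. The symplectic form gives an isomorphism $E\cong E^{\vee}$ and, by nondegeneracy, an exact sequence $0\to D_{i-1}^{\perp}\to E\to D_{i-1}^{\vee}\to 0$, whence $[(D_{i-1}^{\perp})^{\vee}]=[E]-[D_{i-1}]$ and therefore $[(D_{i-1}^{\perp}/L)^{\vee}]=[E]-[D_{i-1}]-[L^{\vee}]$. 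Combining these with $[(F^{\lambda_i-1})^{\vee}]-[E]=-[(E/F^{\lambda_i-1})^{\vee}]$ (again using $E\cong E^{\vee}$) collapses the expression to
\[
[(F^{\lambda_i-1}/D_{i-1})^{\vee}]-[(D_{i-1}^{\perp}/L)^{\vee}]=[(L-E/F^{\lambda_i-1})^{\vee}]+[D_{i-1}-D_{i-1}^{\vee}],
\]
so that $\pi_{i*}(\tau_i^s\alpha_i)=\SE_{\lambda_i+s}\big((L-E/F^{\lambda_i-1})^{\vee}+(D_{i-1}-D_{i-1}^{\vee})\big)$.

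Finally I would feed this into Proposition~\ref{relSegre}, taking $(L-E/F^{\lambda_i-1})^{\vee}$ in the role of $E$ there — so that $\SE_t$ of it is $\sum_{k}\calC_k^{(\lambda_i-1)}t^k$ by the definition of $\calC_k^{(\ell)}$ — and $D_{i-1}^{\vee}-D_{i-1}$ in the role of $F$, so that $F^{\vee}$ becomes $D_{i-1}-D_{i-1}^{\vee}$. This writes the Segre generating function as
\[
c_t(D_{i-1}-D_{i-1}^{\vee})\Big(\sum_k\calC_k^{(\lambda_i-1)}t^k\Big)\Big[1+\alpha_{2m}\sum_{a=1}^{2m-1}\gamma_{2m-a}^{(2m)}p_{2m-a}(D_{i-1}-D_{i-1}^{\vee})t^{-a}\Big].
\]
Extracting the coefficient of $t^{\lambda_i+s}$, using $\alpha_{2m}^2=0$, and reindexing $a\mapsto 2m-a$ in the correction term (harmless since $\gamma_{2m-a}^{(2m)}=\gamma_a^{(2m)}$ by the formula in Lemma~\ref{lemFGL2m}, and $p_{2m-a}\leftrightarrow p_a$ correspondingly) yields precisely the claimed formula. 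The one step requiring care is the middle paragraph: one must legitimize the symbol $D_{i-1}^{\perp}/L$ on the whole flag bundle and verify the two symplectic identities $E\cong E^{\vee}$ and $[D_{i-1}^{\perp}]=[E]-[D_{i-1}^{\vee}]$ in $K^0$; once these are in place the rest is the same formal bookkeeping as in the type $A$ case.
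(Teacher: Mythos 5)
Your proposal is correct and follows essentially the same route as the paper: apply Proposition~\ref{push of tensor} to the $i$-th stage, use the symplectic identity $[D_{i-1}^{\perp}]=[E]-[D_{i-1}^{\vee}]$ (together with $E\cong E^{\vee}$) to rewrite the virtual bundle as $(L-E/F^{\lambda_i-1})^{\vee}+(D_{i-1}-D_{i-1}^{\vee})$, and finish with Proposition~\ref{relSegre}. You merely spell out the $K$-theoretic bookkeeping that the paper compresses into the one-line identity $D_{i-1}^{\perp}=E/D_{i-1}^{\vee}$; note only that the final reindexing $a\mapsto 2m-a$ is a plain change of summation variable and does not actually require the symmetry $\gamma^{(2m)}_{2m-a}=\gamma^{(2m)}_a$.
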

\begin{proof}
We apply Proposition \ref{push of tensor}:
\[
\pi_{i*}\left(\tau_i^s\alpha_i\right)=\calS_{\lambda_i+s}((F^{\lambda_i-1}/D_{i-1})^{\vee} - (D_{i-1}^{\perp}/L)^{\vee})=\calS_{\lambda_i+s}((L-E/F^{\lambda_i-1})^{\vee}-(D_{i-1} - D_{i-1}^{\vee})^{\vee}).
\]
Here we have used the identity $D_{i-1}^{\perp} = E/D_{i-1}^{\vee}$. Now the claim follows from Proposition \ref{relSegre}.
\end{proof}
\subsection{Computing the class $\kappa^C_{\lambda}$}
Let $R=I_{2m}^*(\LG(E))$ and set $\calC_k^{(\ell)}:=\calS_{k}((L-E/F^{\ell})^{\vee})$ as before. Adapting the notations from Section \ref{secAkappa}, we define a homomorphism of graded $R[[t_1,\ldots,t_{i-1}]]_{\gr}$-modules 
\[
\phi_i^C:\scL^{R,i} \to I_{2m}^*(\mathbb{P}(F^{\lambda_{i-1}-1}/D_{i-2}))
\] 
by setting 
\[
t_{1}^{s_{1}}\cdots t_r^{s_r}\mapsto  \tau_1^{s_1}\cdots \tau_{i-1}^{s_{i-1}}\calC_{s_{i}}^{(\lambda_i-1)}\cdots  \calC_{s_r}^{(\lambda_r-1)}.
\]
\begin{lem}\label{lempiphiC}
We have
\begin{eqnarray*}
\pi_{i*}\left(\tau_i^s\alpha_i\right)
&=&\phi_i\left(t_i^{\lambda_i+s}\left(\prod_{j=1}^{i-1}\frac{1-t_j/t_i}{1+t_j/t_i}\right)\left(1-2\alpha_{2m} \sum_{q=1}^{m} \gamma_{2q-1}^{(2m)} \sum_{1\leq j < i}  t_j^{2q-1}t_i^{2m-2q+1}\right)\right).
\end{eqnarray*}
\end{lem}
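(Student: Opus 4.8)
The plan is to derive the stated pushforward from Lemma~\ref{lem: kazarian push C} by translating its right-hand side into the module $\scL^{R,i}$ via $\phi_i^C$, and then repackaging the symmetric-function data $c_q(D_{i-1}-D_{i-1}^\vee)$ and $p_a(D_{i-1}-D_{i-1}^\vee)$ as explicit rational/power-series expressions in the variables $t_1,\dots,t_{i-1}$. First I would recall that $D_{i-1}$ has Chern roots $-\tau_1,\dots,-\tau_{i-1}$ (since $\tau_j=c_1((D_j/D_{j-1})^\vee)$), so $D_{i-1}^\vee$ has Chern roots $\tau_1,\dots,\tau_{i-1}$. Hence the Chern polynomial of the virtual bundle $D_{i-1}-D_{i-1}^\vee$ is $\prod_{j=1}^{i-1}\frac{1-\tau_j t}{1+\tau_j t}$, which under $\phi_i^C$ (sending $\tau_j\mapsto t_j$, and matching the index bookkeeping $\calC_{s}^{(\lambda_i-1)}\leftrightarrow t_i^{s}$) becomes the factor $\prod_{j=1}^{i-1}\frac{1-t_j/t_i}{1+t_j/t_i}$ once one accounts for the shift by $t_i^{\lambda_i+s}$ coming from $\calC_{\lambda_i+s-q}^{(\lambda_i-1)}\leftrightarrow t_i^{\lambda_i+s-q}$.

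Next I would handle the $\alpha_{2m}$-term. The power sums of $D_{i-1}-D_{i-1}^\vee$ are $p_a(D_{i-1}-D_{i-1}^\vee)=p_a(-\tau_1,\dots,-\tau_{i-1})-p_a(\tau_1,\dots,\tau_{i-1})=\big((-1)^a-1\big)\sum_{j<i}\tau_j^a$, which vanishes for even $a$ and equals $-2\sum_{j<i}\tau_j^a$ for odd $a$. Writing $a=2q-1$ with $q=1,\dots,m$, the inner sum $\alpha_{2m}\sum_{a=1}^{2m-1}\gamma_a^{(2m)}p_a(D_{i-1}-D_{i-1}^\vee)\,\calC^{(\lambda_i-1)}_{\lambda_i+s-q+2m-a}$ collapses to $-2\alpha_{2m}\sum_{q=1}^{m}\gamma_{2q-1}^{(2m)}\sum_{j<i}\tau_j^{2q-1}\,\calC^{(\lambda_i-1)}_{\lambda_i+s-q'+2m-(2q-1)}$; after applying $\phi_i^C$ this produces, modulo the overall $t_i^{\lambda_i+s}$ factor and the already-extracted Chern factor, the term $-2\alpha_{2m}\sum_{q=1}^m\gamma_{2q-1}^{(2m)}\sum_{j<i}t_j^{2q-1}t_i^{2m-2q+1}$. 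Here I would be careful that because $\alpha_{2m}^2=0$, the $\alpha_{2m}$-term only needs to be multiplied against the degree-zero part of the Chern factor, so no cross terms between the two correction pieces survive — this is exactly why the formula has the clean product shape $\big(\prod\frac{1-t_j/t_i}{1+t_j/t_i}\big)\big(1-2\alpha_{2m}\sum\cdots\big)$.

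The main obstacle I anticipate is purely bookkeeping: reconciling the index summation variable $q$ in Lemma~\ref{lem: kazarian push C} (which ranges over $c_q,p_a$ of the virtual bundle and contributes a $t_i^{-q}$ after passing through $\phi_i^C$) against the summation producing the full Chern series $\prod_j(1-t_j/t_i)/(1+t_j/t_i)=\sum_q c_q(D_{i-1}-D_{i-1}^\vee)|_{\tau\to t}\,t_i^{-q}$, and likewise checking that the shift $2m-a$ in the subscript of $\calC$ matches the exponent $t_i^{2m-a}$ on the nose. I would also need to double-check that the convergence/support conditions defining $\scL^{R,i}$ are met, i.e.\ that the relevant series lies in the cone $C$; this follows since $\prod_{j<i}\frac{1-t_j/t_i}{1+t_j/t_i}$ expands with bounded-below support in each $t_j$ relative to $t_i$, exactly as in the type $A$ case treated in Lemma~\ref{eq:push A phi}. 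Once these translations are verified term by term, the identity is immediate and the proof concludes. (Note: the statement as displayed writes $\phi_i$ rather than $\phi_i^C$ on the right-hand side — I would use $\phi_i^C$, which is the homomorphism just defined for the Lagrangian setting, and flag this as the intended reading.)
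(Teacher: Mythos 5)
Your proposal follows the paper's proof essentially step for step: both start from Lemma \ref{lem: kazarian push C}, read off the Chern roots $-\tau_1,\dots,-\tau_{i-1}$ of $D_{i-1}$ to identify $\sum_q c_q(D_{i-1}-D_{i-1}^{\vee})u^q$ with $\prod_{j<i}\frac{1-\tau_j u}{1+\tau_j u}$, use $p_a(D_{i-1}-D_{i-1}^{\vee})=((-1)^a-1)\,p_a(\tau_1,\dots,\tau_{i-1})$ to discard the even power sums and produce the factor $-2\sum_{q=1}^{m}\gamma^{(2m)}_{2q-1}(\cdots)$, and then translate everything through $\phi_i^C$ (the paper's statement and proof also write $\phi_i$ for $\phi_i^C$, so that notational slip is shared, and your reading is the intended one).

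One correction to an aside: the product shape of the answer has nothing to do with $\alpha_{2m}^2=0$. In Lemma \ref{lem: kazarian push C} the class $c_q(D_{i-1}-D_{i-1}^{\vee})$ already multiplies \emph{both} the main term and the $\alpha_{2m}$-term inside the sum over $q$, so after summing the full Chern series $\prod_{j<i}\frac{1-t_j/t_i}{1+t_j/t_i}$ genuinely multiplies the correction $-2\alpha_{2m}\sum(\cdots)$. Had you literally multiplied the $\alpha_{2m}$-term only by the degree-zero part of the Chern factor, you would obtain $\prod_{j<i}\frac{1-t_j/t_i}{1+t_j/t_i}-2\alpha_{2m}\sum(\cdots)$, which differs from the stated formula by $2\alpha_{2m}\left(\prod_{j<i}\frac{1-t_j/t_i}{1+t_j/t_i}-1\right)\sum(\cdots)$, a nonzero quantity (the Chern factor contains no $\alpha_{2m}$, so nilpotence does not kill these cross terms). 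Your actual computation, in which the correction is taken ``modulo the already-extracted Chern factor,'' is the right one; simply drop the $\alpha_{2m}^2=0$ justification.
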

\begin{proof}
For convenience, define the polynomial $H_p(t_1,\dots, t_j)$ by the generating function
\[
\sum_{p=0}^{\infty} H_p(t_1,\dots, t_j) u^p = \prod_{k=1}^j \frac{1- t_k u}{1+ t_k u}.
\] 
Note that one has $c_p(D_{i-1} - D_{i-1}^{\vee}) = H_p(\tau_1,\dots,\tau_{i-1})$. From Lemma \ref{lemma Power}, we observe that
\begin{eqnarray*}
\sum_{l=1}^{2m-1} \gamma_l^{(2m)} p_l(D_{i-1} - D_{i-1}^{\vee})\calC_{\lambda_i+s-p+2m-l}^{(\lambda_i-1)}
&=&\sum_{l=1}^{2m-1} \gamma_l^{(2m)} p_l(\tau_1,\dots,\tau_{i-1})((-1)^l -1)\scC_{\lambda_i+s-p+2m-l}^{(\lambda_i-1)}\\
&=&-2\sum_{q=1}^{m} \gamma_{2q-1}^{(2m)} p_{2q-1}(\tau_1,\dots,\tau_{i-1})\scC_{\lambda_i+s-p+2m-2q+1}^{(\lambda_i-1)}.
\end{eqnarray*}
Therefore, by Lemma \ref{lem: kazarian push C} and the definition of $\phi_i$, it follows that
\begin{eqnarray*}
&&\pi_{i*}\left(\tau_i^s\alpha_i\right)\\
&=&\phi_i\left(\sum_{p\geq 0}H_p(t_1,\dots,t_{i-1})\left(t_i^{\lambda_i+s-p}-2\alpha_{2m} \sum_{q=1}^{m} \gamma_{2q-1}^{(2m)} p_{2q-1}(t_1,\dots,t_{i-1})t_i^{\lambda_i+s-p+2m-2q+1}\right)\right)\\
&=&\phi_i\left(t_i^{\lambda_i+s}\sum_{p\geq 0}H_p(t_1,\dots,t_{i-1})t_i^{-p}\left(1-2\alpha_{2m} \sum_{q=1}^{m} \gamma_{2q-1}^{(2m)} p_{2q-1}(t_1,\dots,t_{i-1})t_i^{2m-2q+1}\right)\right).
\end{eqnarray*}
Now the claim is a consequence of the definitions of $H_p$ and $p_{\ell}$.
\end{proof}
\begin{thm}
Let $\lambda$ be a strict partition in $\SP(n)$ of length $r$. The corresponding Lagrangian Kempf-Laksov class $\kappa_{\lambda}^C$ is given by
\begin{eqnarray*}
\kappa_{\lambda}^C
&=&\Pf\big[ \scC^{(\lambda_1-1)}_{\lambda_1}\cdots \scC^{(\lambda_r-1)}_{\lambda_r}  \big] \\
&&\ \ \ - 2\alpha_{2m}\sum_{q=1}^{m}\gamma_{2q-1}^{(2m)} \sum_{1\leq i<j\leq r}  \Pf\big[ \scC^{(\lambda_1-1)}_{\lambda_1}\cdots \scC^{(\lambda_i-1)}_{\lambda_i+2q-1}\cdots \scC^{(\lambda_j-1)}_{\lambda_j+2m-2q+1}\cdots \scC^{(\lambda_r-1)}_{\lambda_r}  \big].
\end{eqnarray*}
where $\Pf\big[ \scC^{(\lambda_1-1)}_{\ell_1}\cdots \scC^{(\lambda_r-1)}_{\ell_r}  \big]$ are the multi-Schur Pfaffian used by Kazarian \cite{Kazarian}.
\end{thm}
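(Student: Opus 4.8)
The plan is to mimic the proof of Theorem~\ref{mainthmA}, replacing the tower of ordinary projective bundles by the Lagrangian one and the multi-Schur determinant by the multi-Schur Pfaffian. Concretely, I would start from the product formula (\ref{Yr prod}) for the class $[\widetilde{\frakX}^{KL}_\lambda \to \Fl(F_{\lambda}^{\bullet})]$ and push it forward stage by stage along $\pi_r, \pi_{r-1}, \dots, \pi_1$, using Lemma~\ref{lempiphiC} at the $i$-th stage. Just as in Section~\ref{secAkappa}, each application of $\pi_{i*}$ replaces the factor $\tau_i^{\lambda_i}$ by the symbol $t_i^{\lambda_i}$, inserts the ratio $\prod_{j<i}\frac{1-t_j/t_i}{1+t_j/t_i}$ coming from $c_\bullet(D_{i-1}-D_{i-1}^\vee)$, and adds the $\alpha_{2m}$-correction $-2\alpha_{2m}\sum_{q=1}^m \gamma_{2q-1}^{(2m)}\sum_{j<i} t_j^{2q-1}t_i^{2m-2q+1}$; the $\calC$-symbols are then interpreted via the final homomorphism $\phi_1^C$ (defined on $\scL^{R,1}=\scL^R$), so that $t_1^{s_1}\cdots t_r^{s_r}\mapsto \calC_{s_1}^{(\lambda_1-1)}\cdots\calC_{s_r}^{(\lambda_r-1)}$. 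After all $r$ stages, $\kappa_\lambda^C$ equals $\phi_1^C$ applied to
\[
\prod_{i=1}^r t_i^{\lambda_i}\prod_{1\le i<j\le r}\frac{1-t_i/t_j}{1+t_i/t_j}\left(1-2\alpha_{2m}\sum_{q=1}^m\gamma_{2q-1}^{(2m)}\sum_{1\le i<j\le r} t_i^{2q-1}t_j^{2m-2q+1}\right),
\]
where the single product over $i<j$ of the $\frac{1-t/t}{1+t/t}$ ratios accumulates from the nested stages exactly as the Vandermonde product did in Theorem~\ref{mainthmA} (and the $\alpha_{2m}$-terms from different stages add, using $\alpha_{2m}^2=0$, to a single sum over pairs $i<j$).

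The second and crucial step is to recognize the image of this Laurent series under $\phi_1^C$ as the stated combination of multi-Schur Pfaffians. For this I would invoke the Pfaffian analogue of the Jacobi–Trudi/Vandermonde identity used by Kazarian in \cite{Kazarian}: the generating identity
\[
\Pf\big[c^{(1)}_{\ell_1}\cdots c^{(r)}_{\ell_r}\big] = \left[\prod_{i=1}^r t_i^{\ell_i}\prod_{1\le i<j\le r}\frac{1-t_i/t_j}{1+t_i/t_j}\right]\text{-coefficient-extraction applied to }\prod c^{(i)},
\]
or more precisely, the statement that $\prod_{i}t_i^{\ell_i}\prod_{i<j}\frac{t_j-t_i}{t_j+t_i}$ is the symbol whose $\phi_1^C$-image is $\Pf\big[\scC^{(\lambda_1-1)}_{\ell_1}\cdots\scC^{(\lambda_r-1)}_{\ell_r}\big]$. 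Applying this to the $\alpha_{2m}=0$ part gives $\Pf\big[\scC^{(\lambda_1-1)}_{\lambda_1}\cdots\scC^{(\lambda_r-1)}_{\lambda_r}\big]$, while multiplying the base symbol by the monomial $t_i^{2q-1}t_j^{2m-2q+1}$ shifts the exponents in slots $i$ and $j$ by $2q-1$ and $2m-2q+1$ respectively, producing $\Pf\big[\cdots\scC^{(\lambda_i-1)}_{\lambda_i+2q-1}\cdots\scC^{(\lambda_j-1)}_{\lambda_j+2m-2q+1}\cdots\big]$; the coefficient $-2\alpha_{2m}\gamma_{2q-1}^{(2m)}$ is carried along verbatim. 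Summing over $1\le i<j\le r$ and $1\le q\le m$ yields exactly the claimed formula.

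The main obstacle I anticipate is the bookkeeping in the first step: verifying that the successive single-stage pushforwards of Lemma~\ref{lempiphiC}, each of which introduces only the ratios $\frac{1-t_j/t_i}{1+t_j/t_i}$ for $j<i$ and an $\alpha_{2m}$-correction involving only the variable being pushed and the earlier ones, telescope correctly into the symmetric double product $\prod_{i<j}\frac{1-t_i/t_j}{1+t_i/t_j}$ over all pairs, with the correction terms combining additively rather than interacting (this is where $\alpha_{2m}^2=0$ is essential, killing all cross terms). One must also be careful that the domains $\scL^{R,i}$ shrink appropriately at each stage so that $\phi_i$ is well-defined on the relevant series, and that the final answer lands in $\scL^{R,1}$ where $\phi_1^C$ is defined — but these convergence/support issues are handled exactly as in \cite{HIMN} and Section~\ref{secAkappa}, so I would simply cite that machinery. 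A secondary point requiring care is the precise normalization and sign conventions in the definition of the multi-Schur Pfaffian $\Pf[\cdots]$ of Kazarian, to make sure the factor of $2$ and the signs $(-1)$ in $\frac{1-t_i/t_j}{1+t_i/t_j}$ match those in \cite{Kazarian}; this is routine once the conventions are fixed.
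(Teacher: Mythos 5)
Your proposal is correct and follows essentially the same route as the paper: repeated application of Lemma \ref{lempiphiC} to the product formula (\ref{kappaCprod}) to arrive at $\phi_1^C$ applied to $\prod_i t_i^{\lambda_i}\prod_{i<j}\frac{1-t_i/t_j}{1+t_i/t_j}\bigl(1-2\alpha_{2m}\sum_q\gamma_{2q-1}^{(2m)}\sum_{i<j}t_i^{2q-1}t_j^{2m-2q+1}\bigr)$, followed by the Schur--Pfaffian generating identity of Kazarian to identify the two pieces with the multi-Schur Pfaffians. The bookkeeping issues you flag (telescoping of the single-stage ratios, additivity of the $\alpha_{2m}$-corrections via $\alpha_{2m}^2=0$, and the support conditions on $\scL^{R,i}$) are exactly the points the paper disposes of by citing the analogous machinery from Section \ref{secAkappa} and \cite{HIMN}.
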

\begin{proof}
The repeated application of Lemma \ref{lempiphiC} to (\ref{kappaCprod}) yields
\[
\kappa_{\lambda} = \phi_1\left(\left(\prod_{i=1}^rt_i^{\lambda_i} \right)\left(\prod_{1\leq i < j\leq r} \frac{1-t_i/t_j}{1+t_i/t_j}\right) \left(1 - 2\alpha_{2m}\sum_{q=1}^{m}\gamma_{2q-1}^{(2m)} \sum_{1\leq i<j\leq r}  t_i^{2q-1}t_{j}^{2m-2q+1} \right)\right).
\]
Now the claim follows by noticing that we can write the function of $t$'s in the right hand side as
\[
\Pf\big[ t_1^{\lambda_1}\cdots t_r^{\lambda_r}  \big] - 2\alpha_{2m}\sum_{q=1}^{m}\gamma_{2q-1}^{(2m)} \sum_{1\leq i<j\leq r}  \Pf\big[ t_1^{\lambda_1}\cdots t_i^{\lambda_i+2q-1}\cdots t_j^{\lambda_j+2m-2q+1}\cdots t_r^{\lambda_r}  \big].
\]
\end{proof}
}
\section{Appendix: Symmetric functions and their evaluation on virtual vector bundles}
Although there exist several families that can be used as a basis of the algebra $\Lambda$ of symmetric functions, for this purpose we will make use of the \textit{elementary} symmetric functions $\{e_i\}_{i\in \NN}$. Besides this family, in our treatment we will also need the \textit{complete} symmetric functions $\{h_i\}_{i\in \NN}$ and the \textit{power sum} symmetric functions $\{p_i\}_{i\in \NN}$. We follow the usual convention of setting $e_0=h_0=p_0=1$ and sometimes we also allow indices to be negative, in which cases the associated symmetric function is understood to be zero.
 
Let us first recall some known formulas relating these three families and an alternative description of complete symmetric functions
\begin{lem}\label{lem power} 
For any  $k\in\NN\setminus\{0\}$, the following equalities hold:
\begin{align*}
&a)\ p_k=(-1)^{k+1} k\cdot e_k-\sum_{i=1}^{k-1}(-1)^{i} p_{k-i}\cdot e_i\ ;\\
&b)\  k\cdot e_k=\sum_{i=1}^k (-1)^{i+1}p_i\cdot e_{k-i}=\sum_{i=0}^{k-1} (-1)^{k+1-i} p_{k-i}\cdot e_i\ ;\\
&c)\  p_k=k\cdot h_k-\sum_{i=1}^{k-1} p_i \cdot h_{k-i}\ ;\\
&d)\   k\cdot h_k=\sum_{i=1}^k p_i\cdot h_{k-i}=\sum_{i=0}^{k-1} p_{k-i}\cdot h_i.
\end{align*}
\end{lem}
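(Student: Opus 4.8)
The goal is the four classical Newton-type identities a)--d) relating $e_k$, $h_k$, $p_k$ in the ring $\Lambda$ of symmetric functions. The plan is to derive them all from a single source, the generating-function identities
\[
E(t):=\sum_{i\geq 0} e_i t^i = \prod_j (1+x_j t), \qquad H(t):=\sum_{i\geq 0} h_i t^i = \prod_j \frac{1}{1-x_j t},
\]
together with the logarithmic derivative, which produces the power sums: $\frac{d}{dt}\log H(t) = \sum_j \frac{x_j}{1-x_j t} = \sum_{k\geq 1} p_k t^{k-1}$, and likewise $\frac{d}{dt}\log E(t) = \sum_j \frac{x_j}{1+x_j t} = \sum_{k\geq 1} (-1)^{k+1} p_k t^{k-1}$. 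All four formulas are the coefficient-wise form of $E'(t) = E(t)\cdot\big(\sum_{k\geq1}(-1)^{k+1}p_k t^{k-1}\big)$ and $H'(t) = H(t)\cdot\big(\sum_{k\geq1}p_k t^{k-1}\big)$.

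First I would record c) and d): from $H'(t) = H(t)\sum_{k\geq1}p_k t^{k-1}$, comparing the coefficient of $t^{k-1}$ gives $k\,h_k = \sum_{i=1}^{k} p_i h_{k-i}$, which is the first equality in d); reindexing $i\mapsto k-i$ and using $h_0=p_0=1$ gives the second form $\sum_{i=0}^{k-1} p_{k-i} h_i$, and isolating the $i=k$ term ($p_k h_0 = p_k$) yields c). Then I would do the analogous computation with $E(t)$: from $E'(t) = E(t)\sum_{k\geq1}(-1)^{k+1}p_k t^{k-1}$, the coefficient of $t^{k-1}$ reads $k\,e_k = \sum_{i=1}^{k}(-1)^{i+1} p_i e_{k-i}$, which is b); reindexing gives the second form $\sum_{i=0}^{k-1}(-1)^{k+1-i} p_{k-i} e_i$, and pulling off the $i=0$ term ($(-1)^{k+1} p_k e_0 = (-1)^{k+1} k e_k$... one must be a little careful here) — more precisely, isolating the top term $i=k$ in the first form, namely $(-1)^{k+1} p_k e_0 = (-1)^{k+1} p_k$, and moving the remaining sum to the other side yields a).

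Alternatively, and perhaps more cleanly for a) and b), one can use the classical relation $E(t)H(-t) = 1$ to pass between the $e$- and $h$-versions directly, but since each pair of identities falls out immediately from its own generating function there is no real need; I would just present the two logarithmic-derivative computations side by side. \textbf{Main obstacle.} There is no substantive obstacle here — these are textbook identities (see e.g.\ Macdonald's book). The only thing requiring care is bookkeeping: keeping the index ranges and the signs $(-1)^{i+1}$ versus $(-1)^{k+1-i}$ straight when reindexing, and correctly separating the extreme term of a sum to pass from the "$k\,e_k=\cdots$" form to the "$p_k=\cdots$" form. I would double-check each by testing $k=1,2$ directly ($p_1=e_1=h_1$; $p_2 = e_1^2-2e_2 = -e_1 p_1 + 2e_2$ reversed, etc.).
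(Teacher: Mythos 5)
Your proposal is correct. For the record, the paper does not actually prove this lemma: its entire ``proof'' is a pointer to \cite[Chapter 6, exercise 1]{YoungFulton}, so there is no argument to compare against; your generating-function derivation is the standard way to do that exercise and is complete. Concretely, all four identities do follow from comparing coefficients of $t^{k-1}$ in $H'(t)=H(t)\sum_{k\ge1}p_kt^{k-1}$ and $E'(t)=E(t)\sum_{k\ge1}(-1)^{k+1}p_kt^{k-1}$, and the passage from the ``$k\,e_k=\cdots$'' and ``$k\,h_k=\cdots$'' forms to a) and c) is just peeling off the extreme term and reindexing. The one spot where you visibly stumbled is the extraction of a): note that isolating the $i=k$ term of $\sum_{i=1}^k(-1)^{i+1}p_ie_{k-i}$ gives $(-1)^{k+1}p_k$ (not $(-1)^{k+1}k\,e_k$, as you first wrote before correcting yourself), and after dividing by $(-1)^{k+1}$ and reindexing $i\mapsto k-i$ the remaining sum becomes $\sum_{i=1}^{k-1}(-1)^ip_{k-i}e_i$ with exactly the sign in a); this is pure bookkeeping and your sanity checks at $k=1,2$ confirm it. No gap.
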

\begin{proof}
See \cite[Chapter 6, exercise 1]{YoungFulton}.
\end{proof}
Our interest in $\Lambda$ is due to the fact that its elements can be evaluated on vector bundles. If we consider an oriented cohomology theory $A^*$, then to any vector bundle $E\rightarrow X$ we can associate a ring homomorphism 
$$\varphi_E^A:\Lambda\rightarrow A^*(X)$$
 which maps $e_i$ to $c_i(E)$. Since elementary symmetric functions form a multiplicative basis of $\Lambda$, the morphism is defined on the whole domain. In general one can define a ring homomorphism $\varphi^A_\alpha$ for any element of $\alpha\in K^0(X)$, but we will be only interested in the classes of the form $[E]-[F]$, in which case one sets $\varphi^A_{[E]-[F]}(e_i)=c_i(E-F)$. From now on we will fix $A^*=I_{2m}^*$ and, in order to simplify the notation, we will write $f(E-F)$ instead of $\varphi^{I_{2m}}_{[E]-[F]}(f)$. 

As the following lemma clarifies, in the special case of the power sum symmetric functions,  the evaluation on a virtual bundle $[E]-[F]$ can be easily expressed by means of the evaluation on the two summands.   
\begin{lem} \label{lemma Power}
For bundles $E$ and $F$ and any strictly positive integer $k$, we have 
\[
p_k(E-F)=p_k(E)-p_k(F).
\]
\end{lem}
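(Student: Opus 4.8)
The plan is to reduce the identity $p_k(E-F) = p_k(E) - p_k(F)$ to a statement purely about symmetric functions and the behaviour of Chern classes under taking differences in $K^0(X)$. Since $p_k$ is a polynomial in the elementary symmetric functions $e_1,\dots,e_k$, and $\varphi^{I_{2m}}_{[E]-[F]}$ is a ring homomorphism sending $e_i$ to $c_i(E-F)$, the quantity $p_k(E-F)$ is obtained by substituting $c_i(E-F)$ into Newton's identity expressing $p_k$ in terms of the $e_i$'s. The key algebraic input is relation (\ref{eq virtual}), namely $c_t(E-F) = c_t(E)/c_t(F)$, equivalently $c_t(E-F)\cdot c_t(F) = c_t(E)$ in $I_{2m}^*(X)[t]$; this is the multiplicativity that makes $[E]-[F] \mapsto c_i(E-F)$ compatible with the group law on $K^0$.

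First I would recall the generating-function form of Newton's identity: if $E$ has Chern roots $x_1,\dots,x_e$, then
\[
\sum_{k\geq 1} p_k(E)\, t^k = \sum_{i} \frac{x_i t}{1 - x_i t} = -\,t\,\frac{d}{dt}\log c_{-t}(E)^{-1}= t\,\frac{d}{dt}\log\!\Big(\sum_{i\geq 0} h_i(E) t^i\Big),
\]
so that the power sums are, up to a standard logarithmic-derivative manipulation, determined by the Chern (equivalently complete) series. The cleanest route is to work with $P_t(E) := \sum_{k\geq 1} p_k(E) t^k$ and observe that it is the image under $\varphi_E$ of the universal symmetric function $\sum_k p_k t^k \in \Lambda[[t]]$, and that this universal series satisfies $\sum_k p_k t^k = -\,t\,\dfrac{d}{dt}\log\big(\sum_i (-1)^i e_i t^i\big)$ in $\Lambda[[t]]$. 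Applying the ring homomorphism $\varphi^{I_{2m}}_{[E]-[F]}$ and using that it is continuous for the $t$-adic topology, I get $P_t(E-F) = -\,t\,\frac{d}{dt}\log c_{-t}(E-F)$. Now invoke (\ref{eq virtual}): $c_{-t}(E-F) = c_{-t}(E)/c_{-t}(F)$, hence $\log c_{-t}(E-F) = \log c_{-t}(E) - \log c_{-t}(F)$, and taking $-t\,\frac{d}{dt}$ gives $P_t(E-F) = P_t(E) - P_t(F)$. Comparing coefficients of $t^k$ yields $p_k(E-F) = p_k(E) - p_k(F)$ for all $k \geq 1$.

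Alternatively, and perhaps more in the spirit of the appendix's Lemma \ref{lem power}, one can argue by induction on $k$ using the recursion $p_k = (-1)^{k+1} k\, e_k - \sum_{i=1}^{k-1} (-1)^i p_{k-i} e_i$: apply $\varphi^{I_{2m}}_{[E]-[F]}$, substitute the Whitney-type identity $c_k(E-F) = \sum_{j=0}^k c_j(E) c_{k-j}(-F)$ (from $c_t(E-F) = c_t(E) c_t(-F)$), and use the inductive hypothesis together with the same identity applied to $E$ alone and to $-F$ alone; the cross terms telescope. Either way the ring-homomorphism property of $\varphi$ and the multiplicativity (\ref{eq virtual}) of the total Chern class are doing all the work, and nothing specific to $I_{2m}^*$ is used beyond its being an oriented cohomology theory.

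\textbf{Main obstacle.} There is no serious obstacle: the only point requiring a little care is the legitimacy of manipulating the formal logarithmic derivative, i.e. making sure all series lie in $I_{2m}^*(X)[[t]]$ (or a suitable localization) and that $\log$ of a series with constant term $1$ is well defined $t$-adically, so that $\varphi$ commutes with $\frac{d}{dt}$ and $\log$. Since $c_{-t}(E)$, $c_{-t}(F)$ both have constant term $1$ and are polynomials in $t$, this is routine. The inductive alternative avoids even this and is essentially a one-line bookkeeping argument off Lemma \ref{lem power}(a) and relation (\ref{eq virtual}).
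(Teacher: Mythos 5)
Your proposal is correct, and your primary route is genuinely different from the paper's. The paper proves the lemma by induction on $k$ via Newton's identity (Lemma \ref{lem power}\,$a)$), expanding $c_i(E-F)=\sum_j(-1)^jc_{i-j}(E)h_j(F)$ and then doing a page of bookkeeping to show that the mixed $E$--$F$ terms cancel using parts $b)$ and $d)$ of that lemma; your ``alternative'' second paragraph is essentially this argument. Your main argument instead packages everything into the generating function $P_t=\sum_{k\ge1}p_kt^k$ and the multiplicativity $c_{-t}(E-F)\,c_{-t}(F)=c_{-t}(E)$ from (\ref{eq virtual}), reducing the cancellation to additivity of the logarithmic derivative; this is shorter and makes transparent \emph{why} the cross terms vanish, whereas the paper's computation verifies it by hand. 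Two small points to tidy up. First, there is a sign slip in your displayed chain: since $\sum_i h_i(E)t^i=c_{-t}(E)^{-1}$, the middle expression should read $+\,t\,\frac{d}{dt}\log c_{-t}(E)^{-1}$ (equivalently $-\,t\,\frac{d}{dt}\log c_{-t}(E)$, which is the form you correctly use afterwards). Second, since $I_{2m}^*(X)$ need not be a $\QQ$-algebra, it is cleaner to avoid $\log$ altogether and use the integral form of the same identity, $P_t(E-F)\cdot c_{-t}(E-F)=-t\,\frac{d}{dt}c_{-t}(E-F)$, which holds universally in $\Lambda[[t]]$ and transports along the ring homomorphism $\varphi_{[E]-[F]}$; dividing by the invertible series $c_{-t}(\cdot)$ then gives $P_t(E-F)+P_t(F)=P_t(E)$ without ever introducing denominators. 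With those adjustments the argument is complete and arguably preferable to the paper's.
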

\begin{proof}
The proof is by induction on $k$, with the base of the induction being $k=1$. Since $p_1=e_1=h_1$, we have
\[
p_1(E-F)=c_1(E-F)=c_1(E)-h_1(F)=p_1(E)-p_1(F).
\]
In view of Lemma \ref{lem power}, $a)$, and of the inductive hypothesis, we have
\begin{eqnarray*}
&&p_k(E-F)\\
&=&(-1)^{k+1}k\cdot c_{k}(E-F)-\sum_{i=1}^{k-1}(-1)^{i}p_{k-i}(E-F)c_i(E-F)\\
&=&(-1)^{k+1}k\sum_{j=0}^k (-1)^j c_{k-j}(E)\cdot h_j(F)
-\sum_{i=1}^{k-1}\left[(-1)^{i} \Big[p_{k-i}(E)-p_{k-i}(F)\Big]\sum_{j=0}^i (-1)^j c_{i-j}(E)\cdot h_j(F) \right]\\
\end{eqnarray*}
Now we want to separate the summands into three different families: the terms that only involve $E$, those that only involve $F$, and the mixed terms. For this we rewrite the last expression.
\begin{eqnarray*}
&&p_k(E-F)\\
&=&\left[(-1)^{k+1}k\cdot c_k(E)+\sum_{i=1}^{k-1}(-1)^{i+1} p_{k-i}(E)\cdot c_i(E)\right]+\left[ -k\cdot h_k(F)+ \sum_{i=1}^{k-1} p_{k-i}(F) \cdot h_i(F)\right]\\
&&+(-1)^{k+1}k \sum_{j=1}^{k-1} (-1)^j c_{k-j}(E) \cdot h_j(F)\\
&&-\sum_{i=1}^{k-1}(-1)^{i} p_{k-i}(E)\sum_{j=1}^i (-1)^j c_{i-j}(E)h_j(F)+\sum_{i=1}^{k-1}(-1)^{i} p_{k-i}(F)\sum_{j=0}^{i-1} (-1)^j c_{i-j}(E)h_j(F).
\end{eqnarray*}
In view of Lemma \ref{lem power}, $a)$ and $c)$, the first line on the right hand side is $p_k(E)-p_k(F)$. We now want to show that the last three terms add up to 0. For this we split the middle term into two parts, which  we combine with the rest.  
\begin{eqnarray*}
&&p_k(E-F)-p_k(E)+p_k(F)\\
&=&\sum_{j=1}^{k-1}(-1)^{k+1-j}(k-j)\cdot h_j(F) \cdot c_{k-j}(E)+\sum_{j=1}^{k-1}(-1)^{k+1-j} j\cdot h_j(F) \cdot c_{k-j}(E)\\
&&-\sum_{i=1}^{k-1}\left[p_{k-i}(E)\sum_{j=1}^i (-1)^{j-i} c_{i-j}(E)\cdot h_j(F)\right]+\sum_{i=1}^{k-1}\left[ p_{k-i}(F)\sum_{j=0}^{i-1} (-1)^{j-i} c_{i-j}(E) \cdot h_j(F)\right]\\
&=&\sum_{j=1}^{k-1} h_j(F)\left[(-1)^{k-j+1}(k-j) \cdot c_{k-j}(E)-\sum_{i=j}^{k-1}(-1)^{j-i}p_{k-i}(E) \cdot c_{i-j}(E)\right]\\
&&+ \sum_{l=1}^{k-1}(-1)^l c_l(E)\left[ -(k-l)\cdot h_{k-l}(F)+\sum_{j=0}^{(k-l)-1} p_{(k-l)-j}(F)\cdot h_{j}(F)\right].
\end{eqnarray*}
It now suffices to apply Lemma \ref{lem power},\,$b)$ and $d)$ to complete the proof.
\end{proof}

\bibliography{references}{}

\begin{thebibliography}{10}

\bibitem{StableAdams}
{\sc Adams, J.~F.}
\newblock {\em Stable homotopy and generalised homology}.
\newblock University of Chicago Press, Chicago, Ill., 1974.
\newblock Chicago Lectures in Mathematics.

\bibitem{AndersonFulton2}
{\sc Anderson, D., and Fulton, W.}
\newblock {C}hern class formulas for classical-type degeneracy loci.
\newblock \texttt{arXiv:1504.03615}.

\bibitem{SchubertCalmes}
{\sc Calm{\`e}s, B., Petrov, V., and Zainoulline, K.}
\newblock Invariants, torsion indices and oriented cohomology of complete
  flags.
\newblock {\em Ann. Sci. \'Ec. Norm. Sup\'er. (4) 46}, 3 (2013), 405--448
  (2013).

\bibitem{EquivariantCalmes}
{\sc Calm{\`e}s, B., Zainoulline, K., and Zhong, C.}
\newblock Equivariant oriented cohomology of flag varieties.
\newblock {\em Doc. Math.}, Extra vol.: Alexander S. Merkurjev's sixtieth
  birthday (2015), 113--144.

\bibitem{YoungFulton}
{\sc Fulton, W.}
\newblock {\em Young tableaux}, vol.~35 of {\em London Mathematical Society
  Student Texts}.
\newblock Cambridge University Press, Cambridge, 1997.
\newblock With applications to representation theory and geometry.

\bibitem{FultonIntersection}
{\sc Fulton, W.}
\newblock {\em Intersection theory}, second~ed., vol.~2 of {\em Ergebnisse der
  Mathematik und ihrer Grenzgebiete. 3. Folge. A Series of Modern Surveys in
  Mathematics [Results in Mathematics and Related Areas. 3rd Series. A Series
  of Modern Surveys in Mathematics]}.
\newblock Springer-Verlag, Berlin, 1998.

\bibitem{Giambelli}
{\sc Giambelli}.
\newblock Risoluzione del problema degli spazi secanti.
\newblock {\em Mem. R. Accad. Sci. Torino 52}, 2 (1902), 171--211.

\bibitem{SchubertHornbostel}
{\sc Hornbostel, J., and Kiritchenko, V.}
\newblock Schubert calculus for algebraic cobordism.
\newblock {\em J. Reine Angew. Math. 656\/} (2011), 59--85.

\bibitem{ThomHudson}
{\sc Hudson, T.}
\newblock A {T}hom-{P}orteous formula for connective {$K$}-theory using
  algebraic cobordism.
\newblock {\em Journal of K-theory: K-theory and its Applications to Algebra,
  Geometry, and Topology 14\/} (10 2014), 343--369.

\bibitem{GeneralisedHudson}
{\sc {Hudson}, T.}
\newblock {Generalised symplectic Schubert classes}.
\newblock {\em ArXiv e-prints\/} (Apr. 2015).

\bibitem{HIMN}
{\sc Hudson, T., Ikeda, T., Matsumura, T., and Naruse, H.}
\newblock {D}eterminantal and {P}faffian formulas of {K}-theoretic schubert
  calculus.
\newblock 2015, \texttt{arXiv:1504.02828v2}.

\bibitem{Kazarian}
{\sc Kazarian, M.}
\newblock On lagrange and symmetric degeneracy loci.
\newblock {\em Isaac Newton Institute for Mathematical Sciences Preprint
  Series\/} (2000).

\bibitem{KempfLaksov}
{\sc Kempf, G., and Laksov, D.}
\newblock The determinantal formula of {S}chubert calculus.
\newblock {\em Acta Math. 132\/} (1974), 153--162.

\bibitem{EquivariantKiritchenko}
{\sc Kiritchenko, V., and Krishna, A.}
\newblock Equivariant cobordism of flag varieties and of symmetric varieties.
\newblock {\em Transform. Groups 18}, 2 (2013), 391--413.

\bibitem{LevineMorel}
{\sc Levine, M., and Morel, F.}
\newblock {\em Algebraic cobordism}.
\newblock Springer Monographs in Mathematics. Springer, Berlin, 2007.

\bibitem{Quillen}
{\sc Quillen, D.~G.}
\newblock {E}lementary proofs of some results of cobordism theory using
  {S}teenrod operations.
\newblock {\em Advances in Math. 7\/} (1971), 29--56.

\bibitem{Vishik}
{\sc Vishik, A.}
\newblock Symmetric operations in algebraic cobordism.
\newblock {\em Adv. Math. 213}, 2 (2007), 489--552.

\end{thebibliography}
\bibliographystyle{acm}



\end{document}